\begin{document}
\numberwithin{equation}{section}
\newtheorem{cor}{Corollary}[section]
\newtheorem{theorem}[cor]{Theorem}
\newtheorem{prop}[cor]{Proposition}
\newtheorem{lemma}[cor]{Lemma}
\theoremstyle{definition}
\newtheorem{defi}[cor]{Definition}
\newtheorem{remark}[cor]{Remark}
\newtheorem{example}[cor]{Example}
\newcommand{\cC}{\mathcal{C}}
\newcommand{\cL}{\mathcal{L}}
\newcommand{\cH}{\mathcal{H}}
\newcommand{\cun}{\cC^{\infty}}
\newcommand{\cz}{{\mathbb C}}
\newcommand{\rz}{{\mathbb R}}
\newcommand{\fm}{f^{-1}}
\renewcommand{\index}{\mathrm{index}}
\newcommand{\ov}{\widetilde}
\newcommand{\oD}{\overline{D}}
\newcommand{\px}{\partial_x}
\newcommand{\RM}{{\mathbb R}}
\newcommand{\ZM}{{\mathbb Z}}
\newcommand{\CM}{{\mathbb C}}
\newcommand{\Spec}{\operatorname{Spec}}
\newcommand{\supp}{\mathrm{supp}}
\newcommand{\tf}{{\tilde{f}}}\newcommand{\tg}{{\tilde{g}}}
\newcommand{\tih}{\tilde{h}}
\newcommand{\tphi}{\tilde{\phi}}
\newcommand{\vol}{\mathrm{vol}}
\newcommand{\Xe}{X_\epsilon}

\def\f{\varphi}
\def\e{\varepsilon}
\def\d{\delta}
\def\O{\Omega}
\def\P{\mathrm{P}}
\def\a{\alpha}
\def\b{\beta}
\def\g{\gamma}
\def\la{\langle}
\def\ra{\rangle}

\def\SO{{\rm SO}}
\def\Spin{{\rm Spin}}
\def\U{{\rm U}}
\def\pso{P_{\SO(2m+1)}N}
\def\psp{P_{\Spin(2m+1)}N}
\def\psu{P_{\U(1)}N}
\def\psou{P_{\SO(2m+2)}N}
\def\psoub{P_{\SO(2m+2)}M}
\def\psub{P_{\U(1)}M}
\def\pspc{P_{\Spin^c(2m+1)}N}
\def\pspcu{P_{\Spin^c(2m+2)}N}
\def\pspcub{P_{\Spin^c(2m+2)}M}
\def\be{\begin{equation}}
\def\ee{\end{equation}}
\def\beq{\begin{eqnarray*}}
\def\eeq{\end{eqnarray*}}
\def\p{\Psi}
\def\.{{\cdot}}
\def\X{{X^*}}
\def\Y{{Y^*}}
\def\Z{{Z^*}}
\def\n{\nabla}
\def\dt{{\partial_t}}

\title{The Dirac operator on generalized Taub-NUT spaces}
\author{Andrei Moroianu}
\address{Centre de Math\'ematiques\\
\'Ecole Polytechnique\\
91128 Palaiseau Cedex\\
France}
\email{am@math.polytechnique.fr}
\author{Sergiu Moroianu}
\address{Institutul de Matematic\u{a} al Academiei Rom\^{a}ne\\
P.O. Box 1-764\\RO-014700
Bu\-cha\-rest, Romania}
\address{\c Scoala Normal\u a Superioar\u a 
Bucharest, calea Grivi\c tei 21, Bucharest, Romania}
\email{moroianu@alum.mit.edu}
\date{\today}
\begin{abstract}
We find sufficient conditions for the absence of harmonic $L^2$ spinors
on spin manifolds constructed as cone bundles over a compact K\"ahler base.
These conditions are fulfilled for certain perturbations of the 
Euclidean metric, and also for the generalized Taub-NUT metrics 
of Iwai-Katayama, thus proving a conjecture of Vi\c sinescu and the
second author. 

\end{abstract}
\subjclass[2000]{58J50, 58J20}
\keywords{Dirac operator, non-Fredholm $L^2$ index, generalized
  Taub-NUT metrics.} 
\maketitle

\section{Introduction}

The Taub-NUT metrics on $\rz^4$ and their generalizations by
Iwai-Katayama \cite{IK} provide a fruitful  
framework for the study of classical and quantum anomalies in the presence of
conserved quantities, see e.g.\ \cite{CV}. To describe these metrics, consider the sphere
$S^3$ as the unit sphere inside the quaternions. There exist then three 
orthogonal unit vector fields $I,J,K$ given by left translation with 
the unit quaternions $i,j,k$. The Berger metrics $g_\lambda$
on $S^3$ are defined by setting the length of $I,J$ to be $1$, and
that of $K$ to be $\lambda$. The Iwai-Katayama metrics on
$\RM^4\setminus\{0\}\simeq\RM^+\times S^3$ have the form
\be\label{tn}
g_{IK}=\gamma^2(t)(dt^2+4t^2 g_{\lambda(t)})
\ee
where 
\begin{align*}
\gamma(t)=\sqrt{\tfrac{a+bt}{t}},&&\lambda(t)=\tfrac{1}{\sqrt{1+ct+dt^2}}
\end{align*}
for positive constants $a,b,c,d$. The apparent singularity at the origin is 
removable.

We are interested here in the axial quantum anomaly already studied in 
\cite{CMV, MV}. It was found in \cite{CMV} that the axial anomaly, 
i.e., the 
difference between the number of null states of positive and of negative 
chirality on a ball or annular domain,
may become non-zero for suitable choices of the parameters of the metric 
and of the domain when we impose the Atiyah-Patodi-Singer spectral condition 
at the boundary. Remarkably, when the radius of the ball is sufficiently 
large the index was always $0$. It was further proved in \cite{MV} that on 
the whole space, although the Dirac operator is not Fredholm, it only has 
a finite number of null states. The method of proving the finiteness of 
the index in \cite{MV} relied on a general index formula due to Vaillant 
\cite{boris}, and on a comparison between harmonic spinors for a pair of 
conformal metrics. On the standard Taub-NUT space, which is hyperk\"ahler 
and therefore scalar-flat, it is easy to see that there are no harmonic 
$L^2$ spinors using the Lichnerowicz identity and the infiniteness 
of the volume. It was somehow 
natural to conjecture in \cite{MV} that the $L^2$ index of the Dirac 
operator corresponding to the generalized Taub-NUT metric is also zero. 
The motivation of the present work is to prove the above conjecture:

\begin{theorem}\label{th1}
There do not exist $L^2$ harmonic spinors on $\RM^4$ for the 
generalized Taub-NUT metrics. In particular, the $L^2$ index of the
Dirac operator 
vanishes.
\end{theorem}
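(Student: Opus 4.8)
The plan is to exploit the conformal covariance of the Dirac operator together with the circle-bundle structure $S^3\to S^2$ underlying the Berger spheres. Writing $g_{IK}=\g^2\bar g$ with $\bar g=dt^2+4t^2g_{\lambda(t)}$, conformal invariance of $D$ in dimension four (with conformal weight $\tfrac{n-1}{2}=\tfrac32$) gives a bijection $\varphi\mapsto\g^{-3/2}\varphi$ between $D_{\bar g}$-harmonic spinors and $D_{g_{IK}}$-harmonic spinors, under which $\g^{-3/2}\varphi$ is $L^2$ for $g_{IK}$ exactly when $\int|\varphi|^2\,\g\,d\vol_{\bar g}<\infty$. Since $\g(t)=\sqrt{(a+bt)/t}$ tends to $\sqrt b$ as $t\to\infty$ and behaves like $\sqrt{a/t}$ near $t=0$ — where $\bar g$ is itself conformal to the Euclidean metric and the origin is a removable point — this weight is harmless (the standard Lichnerowicz argument already disposes of the scalar-flat hyperk\"ahler Taub-NUT, but fails for the generalized metrics, which is why one must work harder). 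It thus suffices to rule out nontrivial $D_{\bar g}$-harmonic spinors on $\RM^4$ that are regular at the origin and square-integrable against $\g\,d\vol_{\bar g}$ near infinity.

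Next I would separate variables. On $\RM^4\setminus\{0\}\cong(0,\infty)_t\times S^3$, decompose the spinor bundle according to the $S^1$-action of the Hopf fibration $S^3\to S^2=\mathbb{CP}^1$: the square-integrable spinors split as a Hilbert sum over half-integer weights $k$, and each weight space decomposes further into eigenspaces of the Dirac operator on $\mathbb{CP}^1$ twisted by $\mathcal O(k)$, whose spectrum is explicit. Under this decomposition the equation $D_{\bar g}\varphi=0$ reduces, on each finite-rank summand over $(0,\infty)$, to a first-order linear ODE system $\tfrac{d}{dt}U=M_{k,\mu}(t)U$, where $\mu$ runs over the discrete twisted-Dirac eigenvalues. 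The remaining task is one-dimensional: for every pair $(k,\mu)$, show that no solution $U$ is simultaneously smooth at $t=0$ and square-integrable (with the weight $\g$) at $t=\infty$.

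The ODE analysis has two regimes. Near $t=0$ the system has a regular singular point whose indicial roots are those of the flat $\RM^4$ model; regularity at the removable origin selects a one-dimensional space of solutions behaving like a positive power of $t$. Near $t=\infty$ the Berger fiber collapses, $\lambda(t)\to0$, so the system is modeled on the standard Taub-NUT radial operator, perturbed by the parameters $b,c,d$; its solutions obey an exponential/WKB dichotomy, and one shows that the branch which is regular at the origin is precisely the one that grows, or at least fails to decay fast enough to be $L^2$ against $\g\,d\vol_{\bar g}$ (at infinity $d\vol_{\bar g}\sim t^2\,dt\,d\sigma$ and $\g\to\sqrt b$, so one needs strictly faster than $t^{-3/2}$ decay). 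Carrying this out for every mode, and recalling that $\ker D_{g_{IK}}$ in $L^2$ is in any case finite-dimensional by the general $L^2$-index theory, yields Theorem \ref{th1} and hence the vanishing of the $L^2$ index.

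The main obstacle I anticipate is the global matching in the ODE step, uniformly in the infinitely many modes $(k,\mu)$: the local behaviors at $0$ and at $\infty$ are each easy, but linking them — excluding that for some small mode a decaying-at-infinity solution happens also to be regular at the origin — requires either a monotonicity/Riccati estimate along $t$ or a sufficiently precise connection analysis between the two singular points, and this is where the hypotheses $a,b,c,d>0$ must genuinely enter. A secondary point needing care is the interface between the conformal weight $\g\sim\sqrt{a/t}$ and the indicial exponents at $t=0$, to ensure no spurious $L^2$ solution is created there.
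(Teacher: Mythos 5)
Your overall route coincides with the paper's: handle the conformal factor $\gamma$ (the paper does this by noting that a radial conformal factor keeps the metric in the circle-fibered warped-product class after the substitution $s=\int\gamma\,dt$, Remark \ref{rem4} and Lemma \ref{lemagama}, and it remarks explicitly that invoking conformal covariance as you do is equivalent), then decompose along the Hopf $S^1$-action and into eigenmodes of a twisted Dirac operator on $\CM\P^1$, and reduce $D\varphi=0$ to a $2\times2$ first-order ODE system in the radial variable. However, your proposal stops exactly where the real work begins. The step you describe as ``one shows that the branch which is regular at the origin \dots fails to decay fast enough'' --- and which you then flag as the main obstacle, to be handled by some unspecified monotonicity or connection analysis --- is the entire analytic content of the paper's proof. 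It is not a routine WKB/connection-coefficient matching between the two singular points: the paper stresses in the introduction that the large-$t$ asymptotics alone cannot rule out $L^2$ solutions, and the behavior at the origin is used in a different way than an indicial-root selection. As written, the proposal therefore has a genuine gap at its central step, and the asymptotic picture you sketch at infinity (regular branch ``grows'') is not what the actual argument establishes.

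For comparison, here is what fills that gap in the paper (Section \ref{sect5}). After renormalizing the mode functions to $U=u\beta^{1/2}\alpha^{m}$, $W=w\beta^{1/2}\alpha^{m}$ one obtains the system \eqref{sy1} with real coefficients $\rho,\sigma,\tau$. Boundedness of the spinor at the origin together with conditions (b), (c) gives $U,W\to0$ as $t\to0$ (equation \eqref{lim}); combined with square-integrability this yields the key Lemma \ref{sig}: any monotone linear combination of $U,W$ vanishes identically. The pointwise inequality $\beta^2\le2\alpha^2$ (condition (c), which is precisely where the positivity of the Iwai--Katayama parameters enters) gives $\rho+\tau\ge-\frac{1}{\sqrt2\,\alpha}$, and together with the divergence hypothesis (a) this forces $UW<0$ everywhere (Lemmas \ref{pos} and \ref{pos1}). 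A sign analysis of $\rho$ and $\tau$ then eliminates all modes except $k=0$, $m=2l+1$, $\varepsilon=(-1)^{l+1}$ (for $\RM^4$: $m=1$, $l=0$), and in that remaining case the difference $D=U-W$ satisfies the scalar equation \eqref{d}, whose nonzero solutions are either not square-integrable at infinity when $|\lambda|\le2^{-3/2}$ (again by (a)), or monotone when $|\lambda|\ge2^{-3/2}$, contradicting Lemma \ref{sig}. Note that the origin enters only through the vanishing \eqref{lim}, not through selecting a one-dimensional ``regular'' branch, and that the argument is uniform over all modes $(k,\lambda,l,\varepsilon)$ at once. To complete your proof you would need to supply an argument of this kind --- a sign/monotonicity (Riccati-type) estimate valid simultaneously for every mode --- which is precisely the content the proposal defers.
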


As we just mentioned, for the standard Taub-NUT metric this has been proved in 
\cite{MV}. Our approach here is less analytic, and more geometric, than the 
previous attempt
described above. We exploit the rich symmetries of the metric to decompose the 
spinors in terms of frequencies along the fibers as in e.g.\ \cite{nistor},
and then further in terms of eigenvalues of an associated $\mathrm{spin}^c$ 
Dirac operator on $S^2$. We obtain a system of ordinary differential equations
which we show does not admit any $L^2$ solutions. There are similarities
with \cite{lott}, \cite{ANSER07} in the analysis of this system, but 
the essential difference is that large time behavior is not enough 
to rule out $L^2$ harmonic spinors and we must use also the behavior near 
the origin. The method is more general and we can prove
our results for a wider class of manifolds, constructed from a circle
fibration over a compact Hodge base. Although the one-point completion 
of such a manifold will not be in general a topological manifold, we
consider it as a singular complete metric space, the appropriate condition on 
spinors being boundedness in the $L^\infty$ norm near the singular point.
Our main result (Theorem \ref{main})
applies both to the Iwai-Katayama metrics and to Euclidean metrics, 
and to certain perturbations thereof.

The paper is organized as follows: In Section \ref{cfwp} we introduce the 
class of metrics studied in the rest of the paper. In Section \ref{sect3}
we relate geometric objects -- like the Levi-Civita connection and
the Dirac operator -- of a circle-fibered space to the
corresponding objects on the base, and we  
introduce the announced splitting into frequencies along the fibers. 
Section \ref{sect4} contains similar computations in the case of
warped products, 
introducing an extra variable corresponding to the radial direction,
and computing the corresponding spin$^c$ Dirac operator. The main
analytic result 
is stated and proved in Section \ref{sect5} by reducing the problem to 
a linear system of ordinary differential equations on the positive
real half-line, and a careful analysis
both near infinity and $0$ to exclude $L^2$ solutions.
Finally in Section \ref{sectap} we extend the result in a rather formal way 
to include the Iwai-Katayama generalized Taub-NUT metrics.

\subsection*{Acknowledgments} The authors are indebted to Mihai 
Vi\c sinescu for his help concerning generalized Taub-NUT metrics and
to the anonymous referee for valuable comments. The authors
acknowledge the support of the 
  Associated European Laboratory ``MathMode". A.M.\ was partially
  supported by ANR-10-BLAN 0105. 
S.M.\ was partially supported by grant PN-II-ID-PCE 1187/2009.

\section{Circle fibered warped products}\label{cfwp}

Let $(B,g_B,\O)$ be a compact K\"ahler manifold of real dimension $2m$. Let
$h$ denote the warped product metric $dt^2+\a ^2(t) g_B$ on
$N:=\RM^+\times B$ and let $p$ denote the projection $N\to B$.

Assume that $(B,g_B,\O)$ is a Hodge manifold, i.e., $[\O]\in 2\pi H^2(B,\ZM)$. 
The classical isomorphism of \v Cech cohomology groups
$ H^1(B,S^1)\simeq H^2(B,\ZM)$ shows the existence of
a Hermitian line bundle $L_0\to B$ with first Chern class
$c_1(L_0)=-[\O]/2\pi$. Let $M_0$ denote the circle bundle of $L_0$.
The projection $q:M_0\to B$ can be
viewed as a principal $S^1$-bundle. By Chern-Weil 
theory (cf. \cite{m}, Ch.\ 16 for instance) there exists an
imaginary-valued connection 1-form $i\xi$ on $M_0$ such that $d\xi=q^*\O$.

We define $L:=\RM^+\times L_0$ and $\pi:=\mathrm{id}\times
p$ the projection of $L$ onto $N$. 
Then $\pi:L\to N$ is a Hermitian line 
bundle over $N$ whose circle bundle, denoted by $M$, is just
$M:=\RM^+\times M_0$.  

We endow $M$ with the Riemannian metric
$g:=dt^2+\a ^2(t)(p\circ \pi)^*g_B+
\b^2(t)\xi\otimes\xi$ for some positive functions $\a$ and $\b$ 
defined on $\RM^+$. 

The Riemannian manifold $(M,g)$ obtained in this way will be referred
to as the {\em circle-fibered warped product} (CFWP) over the Hodge manifold 
$(B,g_B,\O)$, with warping functions $\a$ and $\b$. Notice that a CFWP
can be viewed 
either as a {\em generalized cylinder} of a family of metrics on the
$S^1$-bundle 
$M_0$ over $B$ (cf. Proposition \ref{pr1} below) or as a Riemannian submersion
with 1-dimensional fibres over a warped product
$\RM^+\times_{\a}B$. It is the latter  
point of view which will be useful in order to relate spinors on $M$ and $B$.

\begin{example}\label{ex1}
The flat space $\CM^{m+1}\setminus \{0\}$ is the CFWP over the 
complex projective space 
$(\CM {\rm P}^m, g_{FS},\O_{FS})$ endowed with
the Fubini-Study metric, with warping functions $\a(t)=\frac{t}{\sqrt{2}}$, 
$\b(t)=t$. The normalization $g_{FS}$ of the Fubini-Study metric used here is
the one with scalar curvature equal to $2m(m+1)$ or, equivalently, the
one for which the projection $\mathbb{S}^{2m+1}\to(\CM {\rm P}^m,
\frac12g_{FS})$ is a Riemannian submersion (cf.\ \cite{m}, Ch.\ 13).
\end{example}

\begin{example}\label{ex2}
The Taub-NUT metric on $\CM^2$ is conformal to the one-point completion 
of the CFWP over the standard 2-sphere 
of radius $1/\sqrt{2}$ with warping functions 
$\a(t)=\sqrt{2}t$,
$\b(t)=\frac{2t}{1+bt}$. More generally, the {\em generalized} Taub-NUT 
metrics of Iwai-Katayama on $\CM^2$ are conformal to the one-point 
completion of the CFWP over 
$(\CM{\rm P}^1,g_{FS})$, i.e., 
the standard $2$-sphere of radius $1/\sqrt{2}$,
with warping functions $\a(t)=\sqrt{2}t$, 
$\b(t)=\frac{2t}{\sqrt{1+ct+dt^2}}$ 
for some positive constants $c$ and $d$ (cf. \cite{MV}, p.\ 6576):
\be\label{mik}
g_{IK}=\tfrac{a+bt}{t}
\left(dt^2+\a^2(t)\pi^*g_{FS}+\b^2(t)\xi\otimes\xi\right).
\ee
\end{example}
By Remark \ref{rem4} below, these are actually examples of CFWP's.
\begin{prop}\label{pr1}
Let $(M,g)$ be the CFWP over a Hodge manifold $(B^{2m},g_B,\O)$ 
with warping functions $\a$ and $\b$ and assume that 
$\displaystyle\lim_{t\to 0}\a(t)=\lim_{t\to 0}\b(t)=0$. 
Let $d$ denote the distance on $M$ induced by $g$. 
Then the metric completion $(\hat M,d)$ of $(M,d)$ has exactly 
one extra point. If $g$ extends to a smooth metric on $\hat M$, 
then  $(B,g_B,\O)$ is the complex projective space endowed with 
the Fubini-Study metric, and  
\[\displaystyle\lim_{t\to 0}\tfrac{\a(t)}{t}-\tfrac{1}{\sqrt 2}
=\lim_{t\to 0}\tfrac{\b(t)}{t}-1=0.\]
\end{prop}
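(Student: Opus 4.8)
\noindent\emph{Proof strategy.} The plan is to treat the two assertions separately.

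For the uniqueness of the extra point, I would first note that, because $g$ has the warped form $dt^2+(\text{non-negative symmetric tensor})$, the length of any curve in $M$ is at least the total variation of its $t$-coordinate, so $d\big((t,m),(s,m')\big)\ge|t-s|$, with equality along $u\mapsto(u,m)$; hence $d\big((t,m),(s,m)\big)=|t-s|$. Thus a Cauchy sequence $(t_n,m_n)$ has $t_n\to t_\infty\in[0,\infty)$: if $t_\infty>0$ it eventually lies in the compact set $[t_\infty/2,2t_\infty]\times M_0$ and converges in $M$, while if $t_\infty=0$ it cannot. Next I would bound the $g$-diameter of the slice $\{t\}\times M_0$ by $\a(t)\,\mathrm{diam}(B,g_B)+2\pi\b(t)$ — joining two points by the horizontal lift of a minimizing curve in $B$ followed by a fibre arc — which $\to0$; together with $d\big((t_n,m),(s_n,m)\big)=|t_n-s_n|\to0$ this shows all Cauchy sequences with $t_n\to0$ are equivalent. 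Hence exactly one point $\hat p$ is added, and $d\big((t,m),\hat p\big)=t$.

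For the rigidity statement I would argue as follows. Assuming $g$ extends smoothly to $\hat M$, the identity above says $t=d(\cdot,\hat p)$; since a metric $dt^2+g_t$ has $\n_{\dt}\dt=0$, the curves $t\mapsto(t,m)$ are the unit-speed radial geodesics from $\hat p$, so $g=dt^2+g_t$ with $g_t=\a^2(t)(p\circ\pi)^*g_B+\b^2(t)\,\xi\otimes\xi$ is $g$ written in geodesic polar coordinates at $\hat p$. For small $\e$ the geodesic sphere $\{t=\e\}$ is $\exp_{\hat p}$-diffeomorphic to the unit sphere of $(T_{\hat p}\hat M,g_{\hat p})$, so $M_0\cong S^{2m+1}$; and the Taylor expansion of a smooth metric in normal coordinates gives $t^{-2}g_t\to\sigma$ in $C^\infty(M_0)$, $\sigma$ a round metric of radius one. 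Pairing this limit with a $\xi$-horizontal and a vertical vector field forces $\a(t)/t\to A$, $\b(t)/t\to B$ for finite $A,B>0$ and $\sigma=A^2(p\circ\pi)^*g_B+B^2\,\xi\otimes\xi$.

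To finish, I would identify the data from the last equality. The principal $S^1$-action on $M_0$ coming from $L_0$ preserves $(p\circ\pi)^*g_B$ and the connection form $\xi$, hence is isometric for $\sigma$; being free, it is conjugate, via an isometry $\Phi$ of $(M_0,\sigma)\cong(S^{2m+1},\text{round})$, to the Hopf action — every isometric $S^1$-action on a round sphere is linear, a free one on $S^{2m+1}$ has all weights $\pm1$, and the signs are removed by reflections in $\mathrm{O}(2m+2)$. For the Hopf action $\sigma=q_H^*\!\big(\tfrac12 g_{FS}\big)+\xi_H\otimes\xi_H$ with $q_H\colon S^{2m+1}\to\CM\mathrm{P}^m$ and $d\xi_H=q_H^*\O_{FS}$. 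Writing $\Phi^*\xi_H=\xi+q^*\eta$ (both connection forms of the same bundle) and comparing the two expressions for $\sigma$, the vertical part gives $B=1$, the mixed part gives $\eta=0$, and the horizontal part then gives $A^2g_B=\tfrac12\bar\Phi^*g_{FS}$, where $\bar\Phi\colon B\to\CM\mathrm{P}^m$ is induced by $\Phi$. Differentiating $\Phi^*\xi_H=\xi$ yields $\O=\bar\Phi^*\O_{FS}$; since $\O$ is the K\"ahler form of $g_B$ and $\bar\Phi^*\O_{FS}$ the K\"ahler form of $2A^2 g_B$ (for $\bar\Phi^*J_{FS}$), we get $J_B=2A^2\bar\Phi^*J_{FS}$, and squaring forces $2A^2=1$. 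Thus $A=1/\sqrt2$, $B=1$, $\bar\Phi$ is a K\"ahler isometry, and $(B,g_B,\O)\cong(\CM\mathrm{P}^m,g_{FS},\O_{FS})$, as claimed.

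The hard part will be the rigidity: extracting from mere smoothness of $g$ at $\hat p$ the precise limit metric $\sigma$ and its identification with the round metric, and then carrying out the bookkeeping of connection and K\"ahler forms carefully enough to pin the constants down to exactly $1/\sqrt2$ and $1$ and not just up to scale. The classification of isometric circle actions on round spheres used in passing is elementary, but has to be applied to the particular action defined by $L_0$.
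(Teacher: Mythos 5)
Your argument is correct, and its skeleton coincides with the paper's: the completion statement follows from the rays $t\mapsto(t,x)$ being unit-speed geodesics together with the fact that the slices $\{t\}\times M_0$ have diameter tending to $0$ (the paper leaves these details implicit, you fill them in), and the rigidity follows from the rescaled slice metrics $t^{-2}g_t=\left(\tfrac{\a(t)}{t}\right)^2 q^*g_B+\left(\tfrac{\b(t)}{t}\right)^2\xi\otimes\xi$ converging to the round metric on $S^{2m+1}$ — you get this from the normal-coordinate expansion at $\hat p$, the paper from the Gauss Lemma and Gromov--Hausdorff convergence of rescaled distance spheres, which is the same content. Where you genuinely diverge is the endgame. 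The paper obtains $\b_0=1$ from the fibres being closed geodesics of length $2\pi\b_0$ on the round sphere, identifies the quotient as $(\CM\P^m,\tfrac12 g_{FS})$ by citing the classification of Riemannian submersions of the round sphere, and then pins down $\a_0=1/\sqrt2$ cohomologically: $L_0$ is the tautological bundle $\mathcal{O}(-1)$, so $c_1(L_0)=-[\O_{FS}]/2\pi$, which combined with the normalization $c_1(L_0)=-[\O]/2\pi$ fixes the scale. You instead prove the sphere step directly (isometric circle actions are linear, freeness forces weights $\pm1$, hence conjugacy to the Hopf action) and replace the Chern-class normalization by a pointwise one: the equivariant isometry lets you compare connection forms, $\Phi^*\xi_H=\xi$, so $d\xi=q^*\O$ versus $d\xi_H=q_H^*\O_{FS}$ gives $\O=\bar\Phi^*\O_{FS}$, and compatibility of $\O$ with $g_B$ (i.e. $J_B^2=-\mathrm{Id}$) forces $2A^2=1$, $B=1$. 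Your route is more self-contained (no appeal to $H^2(\CM\P^m,\ZM)$ or to the identification of $L_0$ with $\mathcal{O}(-1)$) at the cost of the explicit conjugation and bookkeeping of $\Phi^*\xi_H=\xi+q^*\eta$; the paper's route is shorter but leans on the stated normalization of $c_1(L_0)$ and on the cited classification. Both are complete proofs of the statement.
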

\begin{proof}
The Riemannian manifold $(M,g)$ will be viewed as a generalized
cylinder (cf.\ \cite{bgm05}) of the family of metrics $g_t:=\a
^2(t)q^*g_B+\b^2(t)\xi\otimes\xi$ on the $S^1$-bundle $M_0$ over $B$
(which is a compact manifold). The first statement follows immediately
from the fact that for every $x\in M_0$, the rays $\RM^+\times \{x\}$
are geodesics parametrized by arc-length on $M$. Assume now that $g$
extends smoothly to $\hat M$. The Gauss Lemma applied to a
neighborhood of the origin $t=0$ in $\hat M$ shows that the distance
spheres $(M_0, g_t)$ (renormalized by a factor $1/t^2$) tend to the
standard sphere $S^{2m+1}$ in the Gromov-Hausdorff topology. In other
words, there exist non-zero constants $\a_0$, $\b_0$ such that
$\lim_{t\to 0}\a(t)/t=\a_0$, $\displaystyle\lim_{t\to 0}\b(t)/t=\b_0$
and $\a_0 ^2q^*g_B+\b_0^2\xi\otimes\xi$ is the standard metric on
$S^{2m+1}$. On the other hand, this metric is by definition a
Riemannian submersion over $(B, \a_0 ^2g_B)$ with totally geodesic
fibres of length $2\pi\b_0$. 
Since the length of every closed geodesic on $S^{2m+1}$ is $2\pi$ we
get $\b_0=1$. The manifold $(B,\a_0 ^2g_B)$ is then the quotient of
the sphere by an isometric $S^1$ action, so $B=\CM \P^m$ and $\a_0
^2g_B=\tfrac12g_{FS}$ (cf. \cite{m}, Ch.13). The constant $\a_0$ is
determined by the normalization condition
$c_1(L_0)=-[\O_B]/2\pi$. Indeed, since $L_0=\RM^{2m+2}\setminus \{0\}$
is clearly the tautological bundle $\O(-1)$ of $\CM \P^m$, its first
Chern class is equal to $-[\O_{FS}]/2\pi$, whence $\a_0=1/\sqrt2$. 
\end{proof}

The converse holds under some extra smoothness assumption on $\a$ and $\b$ at $t=0$ but we will not need this in the sequel.

\begin{remark}\label{rem4}
A metric conformal to a CFWP is itself a CFWP provided that the conformal 
factor is a radial function (i.e., it only depends on $t$). Indeed, if 
\[g=\g(t)^2(dt^2+\a ^2(t)\pi^*g_B+\b^2(t)\xi\otimes\xi),\]
in the new coordinate $s:=s(t)$ defined by 
$s=\int_0^t\g(u)du$, $g$ reads
\[g=ds^2+\a ^2(t(s))\pi^*g_B+\b^2(t(s))\xi\otimes\xi.\] 
The generalized Taub-NUT metrics from Example \ref{ex2} are thus particular 
cases of CFWP. We will analyze these metrics in more detail in Section
\ref{sectap}.
\end{remark} 

Our main goal in this paper will be to study the $L^2$-index 
of the Dirac operator on a CFWP $(M,g)$ when $M$ is a spin manifold. 
As we will see below, this is automatically the case when $B$ has 
a spin$^c$ structure whose auxiliary bundle is some tensor power of 
$L_0$, i.e., if the second Stiefel-Whitney class of $B$ satisfies
$w_2(B)=0$ or $w_2(B)\equiv c_1(L_0)$ mod $2$. In the next two 
sections we will relate spinors on $M$ to spin$^c$ spinors on $N$ and 
then further to spin$^c$ spinors on $B$. The results are quite general 
and can be viewed as a natural extension of the theory of projectable spinors 
introduced in \cite{m95} to the case of submersions with non-totally 
geodesic fibres.

\section{Spinors on circle fibrations}\label{sect3}

Let $\pi:(M,g)\to (N,h)$ be a Riemannian submersion with 1-dimensional fibres
of length $2\pi\b$ for some function $\b:N\to\RM^+$. 
The fibers of $\pi$ are totally geodesic if and only if $\b$ is constant, 
but we will mostly be interested in examples with non-constant $\b$ 
in the sequel. 

We can view $M$ as a principal $S^1$-fibration over $N$. Indeed, the flow 
$\f_t$ of the vertical Killing vector field $V$ on $M$ of length $\pi^*\b$ 
closes up at time $t=2\pi$, i.e., $\f_{2\pi}=id_M$, thus it defines a free 
$S^1$-action on $M$ whose orbit space is $N$. We denote by
$\psu$ this principal $S^1$-bundle with total space $M$.
The Riemannian metric $g$ can be written as $g=\pi^*h+\b^2(t)\xi\otimes\xi$, 
where $\xi$ is the 1-form on $M$ defined by $\xi(V)=1$ and $\ker \xi=V^\perp$.

The 2-form $d\xi$ is basic, i.e., there exists some 2-form $F$ on $N$ such that
$d\xi=\pi^*F$. This follows immediately from the Cartan formula 
and the fact that $V$ is Killing, or alternately since $i\xi$ is 
a connection $1$-form in the principal bundle $\psu$ (cf. Section 2).

The following result holds without restriction on the dimension of $N$ 
but we will state it only for the case we will need in the sequel.

\begin{lemma}\label{pb} Let $\psu\to N$ be the principal $S^1$-bundle over 
the $2m+1$-dimensional manifold $N$
defined by the Riemannian submersion $\pi:M\to N$. Let $L\to N$ be the
complex line bundle associated to $\psu$ with respect to the canonical
representation of $S^1$ on $\CM$. Then every
spin$^c$ structure $\pspc$ on $N$ with auxiliary bundle $L^{\otimes k}$, 
$k\in \ZM$ induces
a spin structure on $M$ and all these spin structures are isomorphic.
\end{lemma}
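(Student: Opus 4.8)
The plan is to produce the spin structure on $M$ by hand from the $\Spin^c$ data on $N$, using that the associated line bundle $L$ becomes canonically trivial after pullback to $M$, and then to check that the construction is insensitive to the choices made. The first point is that, since the vertical Killing field $V$ is nowhere vanishing, the vertical subbundle of $\pi\colon M\to N$ is trivialized by $V/|V|$, so that $TM\cong\pi^*TN\oplus\underline{\RM}$ as an oriented orthogonal direct sum (the orientation of $M$ being induced by that of $N$, which carries a $\Spin^c$ structure and hence is orientable). Completing each oriented orthonormal frame of the horizontal distribution by $V/|V|$, the bundle $\pi^*\pso$ becomes a canonical reduction of the frame bundle $P_{\SO(2m+2)}M$ to $\SO(2m+1)\subset\SO(2m+2)$. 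Since the block inclusion $\Spin(2m+1)\hookrightarrow\Spin(2m+2)$ covers $\SO(2m+1)\hookrightarrow\SO(2m+2)$, restricting and extending identifies spin structures on $M$ with $\Spin(2m+1)$-lifts of $\pi^*\pso$; so it suffices to lift the $\SO(2m+1)$-bundle $\pi^*\pso\to M$ to $\Spin(2m+1)$.

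Next I would record the tautological trivialization: since $L=\psu\times_{\U(1)}\CM$ and $\psu=M$, the map $m\mapsto(m,[m,1])$ is a nowhere-zero section of $\pi^*L=M\times_N L$, so $\pi^*L$ — and hence every $\pi^*(L^{\otimes k})$ — is canonically trivial. Now let $\pspc$ be a spin$^c$ structure on $N$ with auxiliary bundle $L^{\otimes k}$, i.e.\ a principal $\Spin^c(2m+1)$-bundle lying over $\pso$ twisted by the circle bundle of $L^{\otimes k}$. Its pullback $\pi^*\pspc\to M$ is a $\Spin^c(2m+1)$-bundle whose underlying $\SO(2m+1)$-bundle is $\pi^*\pso$ and whose determinant line bundle $\pi^*(L^{\otimes k})$ is canonically trivial. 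In the exact sequence $1\to\Spin(2m+1)\to\Spin^c(2m+1)\to\U(1)\to1$ (the last arrow $[g,z]\mapsto z^2$), a trivialization of the determinant is the same as a section of the associated $\U(1)$-bundle $\pi^*\pspc/\Spin(2m+1)$, and such a section reduces $\pi^*\pspc$ to a principal $\Spin(2m+1)$-bundle $\psp\to M$ still lying over $\pi^*\pso$. By the reduction of the previous paragraph, this is the desired spin structure on $M$.

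It then remains to prove independence of the choices. Any two spin$^c$ structures on $N$ with auxiliary bundles $L^{\otimes k}$ and $L^{\otimes k'}$ differ by tensoring with a complex line bundle $\mathcal E\to N$, and comparing determinants forces $\mathcal E^{\otimes 2}\cong L^{\otimes(k'-k)}$. Pulling back and using the tautological trivialization, $(\pi^*\mathcal E)^{\otimes 2}$ becomes canonically trivial, so $\pi^*\mathcal E$ is the complexification of a real line bundle $\epsilon$ on $M$; tracking $\mathcal E$ through the reduction above shows that the two spin structures produced differ precisely by the twist by $\epsilon$, hence are isomorphic once $\epsilon$ is trivial. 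Here one uses once more that $L$ is tautologically trivial on $M$: $c_1(L)$ — equivalently the Euler class of the circle fibration $\psu\to N$ — pulls back to zero, so one checks via the Gysin sequence of this fibration that $w_1(\epsilon)\in H^1(M;\ZM/2)$, which is the $\pi^*$-image of a class pinned down by $\mathcal E^{\otimes 2}\cong L^{\otimes(k'-k)}$, vanishes. Thus all the spin structures obtained in this way are isomorphic.

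The routine part is the bookkeeping with principal bundles in the first two paragraphs; the crux — and the only place where something could genuinely go wrong — is the independence statement, since $M$ carries an $H^1(M;\ZM/2)$-torsor of spin structures and one must check that the construction always lands in a single isomorphism class regardless of which $\Spin^c$ structure, and which power of $L$, is fed in. All the leverage comes from the tautological triviality of $\pi^*L$, which collapses the $L$-dependence; after that it is a cohomological computation with the Gysin sequence of $\psu\to N$.
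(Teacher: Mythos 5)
Your first two paragraphs are correct, and they are essentially the paper's own proof in different packaging: the paper enlarges the covering $\theta:\pspc\to\pso\times\psu$ to a $\Spin^c(2m+2)$-structure, pulls it back through $\pi$, and observes that the pulled-back auxiliary bundle is trivial because $\pi^*\psu\simeq M\times S^1$ carries the tautological section $u\mapsto(u,u)$; you instead reduce the frame bundle of $M$ to $\SO(2m+1)$ by means of $V/|V|$ and reduce $\pi^*\pspc$ to $\Spin(2m+1)$ using the same tautological trivialization of $\pi^*L$. Same mechanism, different order of the reduction/enlargement steps; no issue there.

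The genuine gap is in your third paragraph, exactly at what you yourself identify as the crux. (Note that the paper's proof stops after the construction and offers no argument at all for ``all these spin structures are isomorphic'', so you are attempting strictly more than the paper proves.) The difference of the two induced spin structures is the class in $H^1(M;\ZM/2)$ of the pair $\bigl(\pi^*\mathcal{E},\,t\bigr)$, where $t$ is the trivialization of $(\pi^*\mathcal{E})^{\otimes 2}$ obtained from the tautological trivializations of $\pi^*L^{\otimes k}$, $\pi^*L^{\otimes k'}$ and the chosen isomorphism $\mathcal{E}^{\otimes 2}\cong L^{\otimes(k'-k)}$. This square-root class is not controlled by the mere fact that $c_1(L)$ pulls back to zero, and the Gysin sequence you invoke works against the vanishing you assert rather than for it: it shows that $\pi^*:H^1(N;\ZM/2)\to H^1(M;\ZM/2)$ is injective. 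Concretely, take $k'=k$ and two spin$^c$ structures on $N$ differing by a nontrivial $2$-torsion complex line bundle $\mathcal{E}$ (they have the same auxiliary bundle $L^{\otimes k}$); then $t$ is pulled back from $N$, so the difference class is $\pi^*$ of the nonzero class determined by $(\mathcal{E},\,\mathcal{E}^{\otimes2}\cong\underline{\CM})$ on $N$, hence nonzero. For instance, for the trivial fibration $M=N\times S^1$ over $N=\RM\P^3$, the two spin structures of $N$, viewed as spin$^c$ structures with the same (trivial) auxiliary bundle, induce the two stabilized pullback spin structures on $M$, which are not isomorphic. So the vanishing of $w_1(\epsilon)$ cannot be ``checked via the Gysin sequence'' in the stated generality: your independence argument as written fails, and any proof of the last clause would require additional hypotheses on $N$ (e.g.\ on torsion in $H^2(N;\ZM)$) or a more restrictive reading of the statement, neither of which appears in your sketch. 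This does not affect the rest of the paper, which only ever uses one fixed spin structure induced by some spin$^c$ structure on the base.
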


\begin{proof} 
By enlargement of the structure groups, the two-fold covering
$$\theta:\pspc\to\pso\times\psu$$
gives a two-fold covering
$$\theta:\pspcu\to\psou\times\psu,$$
which, by pull-back through $\pi$, gives rise to a $\hbox{Spin}^c$
structure on $M$:

\[
\begin{CD}
\pspcub 		@>{\pi}>> 	& \pspcu \\
@V{\pi^*\theta}VV 			& @V{\theta}VV\\
\psoub\times\psub 	@>{\pi}>> 	& \psou\times\psu\\
@VVV					& @VVV\\
M			@>{\pi}>>	& P.
\end{CD}
\]  

This construction actually yields a {\it spin} structure on
$M$. Indeed, the pull back $\psub$ to $M$ of $\psu$ is trivial since it
carries a tautological global section $\sigma(u)=(u,u),\ \forall u\in M=\psu$.
Correspondingly, the pull-back to $M$ of every associated bundle $L^{\otimes k}$ 
is trivial.
\end{proof}

From now on we assume that $N$ carries some spin$^c$ structure with 
auxiliary bundle $L^{\otimes k}$, and we study $M$ with the spin structure 
induced by the previous lemma. In particular, we will consider the 
flat connection on the trivial bundle $\psub$, rather than the pull-back 
connection from $\psu$, in order to define covariant derivatives of spinors 
on $M$. The following result, first proved in \cite{m98}, relates an
arbitrary connection on a principal bundle $\pi:M=P_{\U(1)}N\to N$ and the flat
connection on $\pi^*M=P_{\U(1)}M\to M$.
\[
\begin{CD}
\pi^*M=P_{\U(1)}M\simeq M\times S^1 @>{ \pi}>>
& M=P_{\U(1)}N\\
@V{\pi^*\pi}VV&@V{\pi}VV\\
M@>{\pi}>> & N
\end{CD}
\]  

\begin{lemma} \label{hor} The connection form $A_0$ of the flat
  connection on $P_{\U(1)}M$ can be related to an arbitrary connection
  $A$ on $P_{\U(1)}N$ by 
$$ A_0((\pi^*s)_*(U))=-A(U),$$
$$ A_0((\pi^*s)_*(X^*))=A(s_*X),$$
where $U$ is a vertical vector field on $M$, $X^*$ is the horizontal
lift (with respect to $A$) of a vector field $X$ on $N$, and $s$ is a
local section of $M\to N$. 
\end{lemma}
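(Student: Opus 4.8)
The plan is to trivialize the pull-back bundle and then simply differentiate the tautological section. Since $M=P_{\U(1)}N$ is itself a principal $S^1$-bundle, the pull-back $\pi^*M=\{(u,v)\in M\times M:\pi(u)=\pi(v)\}$ carries the canonical trivialization $(u,v)\mapsto(u,g)$, where $v=u\cdot g$; under this identification the pulled-back section $\pi^*s$ of $\pi^*M\to M$ (defined over $\pi^{-1}(U)$ when $s$ is a section over $U\subset N$) is the map $u\mapsto(u,g(u))$, with $g:\pi^{-1}(U)\to S^1$ characterized by $s(\pi(u))=u\cdot g(u)$. The flat connection form on the trivial bundle $M\times S^1$ is $A_0=\mathrm{pr}_{S^1}^*\theta_{MC}$, the pull-back of the Maurer--Cartan form of $S^1$, so for any $Y\in T_uM$ one has $A_0((\pi^*s)_*(Y))=(g^*\theta_{MC})(Y)=g^{-1}dg(Y)$. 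Everything then reduces to computing $g^{-1}dg$ on vertical vectors and on $A$-horizontal lifts.

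For a vertical vector field $U$, I would write $U_u=\frac{d}{dt}\big|_{t=0}u\exp(t\zeta)$ with $\zeta\in i\RM=\mathrm{Lie}(S^1)$, so that $A(U)=\zeta$ by definition of a connection form. Since $\pi(u\exp(t\zeta))$ does not depend on $t$, the defining relation gives $u\exp(t\zeta)\cdot g(u\exp(t\zeta))=s(\pi(u))=u\cdot g(u)$, whence $g(u\exp(t\zeta))=\exp(-t\zeta)g(u)$ by freeness of the action, and therefore $g^{-1}dg(U)=-\zeta=-A(U)$, which is the first identity. For the $A$-horizontal lift $X^*$ of a vector field $X$ on $N$ (so $\pi_*X^*=X$ and $A(X^*)=0$), I would differentiate $s(\pi(u))=u\cdot g(u)$ along $X^*$ by the Leibniz rule for the action map $M\times S^1\to M$: at the point $s(\pi(u))$,
\[
s_*(X)=(R_{g(u)})_*(X^*_u)+\bigl(g^{-1}dg(X^*)\bigr)^{\#},
\]
$(\cdot)^{\#}$ denoting the fundamental vector field. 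Applying $A$, and using that $A\circ(R_h)_*=A$ because $S^1$ is abelian, together with $A(X^*)=0$ and $A(\zeta^{\#})=\zeta$, gives $A(s_*X)=g^{-1}dg(X^*)=A_0((\pi^*s)_*(X^*))$, the second identity.

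The computation is entirely formal, so I do not expect any genuine obstacle; the points that require attention are rather bookkeeping: the precise meaning of $\pi^*s$ (the pull-back \emph{section} of the pull-back bundle, together with the canonical trivialization $\pi^*M\simeq M\times S^1$ coming from the principal structure of $M$), the normalization of the flat connection form $A_0$ that makes the sign in the first identity come out correctly, and the exact shape of the Leibniz rule for the principal action, which is what produces the fundamental-vector-field term in the horizontal case. This is the content of \cite{m98}, and I would include the short computation only for completeness.
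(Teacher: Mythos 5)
Your proposal is correct and follows essentially the same route as the paper: both trivialize $\pi^*M\simeq M\times S^1$ via the principal structure (your transition function $g$ with $s(\pi(u))=u\cdot g(u)$ is exactly the paper's $a_t$, resp.\ $a_t^{-1}$), and both identities are obtained by differentiating this relation along a vertical vector and along an $A$-horizontal lift, with $A_0$ acting as $g^{-1}dg$. The only difference is presentational: you phrase the horizontal case through the Leibniz rule for the action map and equivariance of $A$, where the paper differentiates the path relation $s(x_t)=u_ta_t$ directly.
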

  
\begin{proof} The identification $M\times {\U(1)}\simeq \pi^*M$
is given by $(u,a)\mapsto (u,ua)$, for all $(u,a)\in M\times
{\U(1)}$. For some fixed $u\in M$, take a path $u_t$ in the
fiber over $x:=\pi(u)$ such that $u_0=u$ and $\dot{u}_0=U$. 
We define $a_t\in {\U(1)}$ by $u_t=s(x)a_t$, so via the above
identification we have  
$$(\pi^*s)(u_t)=(u_t,s(x))=(u_t,(a_t)^{-1}),$$
and thus 
$$A_0((\pi^*s)_*(U))=-a_0^{-1}\dot{a}_0=-A(\dot{u}_0)=-A(U).$$

Similarly, for $x\in N$ and $X\in T_x N$, take a path $x_t$ in $N$
such that $x_0=x$ and $\dot{x}_0=X$. Let $u\in\pi^{-1}(x)$ and $u_t$
the horizontal lift of $x_t$ such that $u_0=u$. We define $a_t\in
{\U(1)}$ by $s(x_t)=u_ta_t$, which by derivation gives
$s_*(X)=R_{a_0}\dot{u}_0+u_0\dot{a}_0$. Then 
$$(\pi^*s)(u_t)=(u_t,s(x_t))=(u_t,a_t),$$
and thus, using the fact that $\dot{u}_0$ is horizontal,
\[A_0((\pi^*s)_*(X^*))=a_0^{-1}\dot{a}_0=A(s_*(X)).\qedhere\]
\end{proof}

Recall that the complex Clifford representation $\Sigma_{2m+2}=
\Sigma ^+_{2m+2}\oplus\Sigma ^-_{2m+2}$ can be identified with
$\Sigma_{2m+1}\oplus\Sigma_{2m+1}$ by defining in an orthonormal basis
\[e_j\cdot(\psi,\phi)=
\begin{cases}
	(e_j\cdot\phi, e_j\cdot\psi) &  \text{for $j\le 2m+1$}\\
	(-\phi,\psi) & \text{for $j=2m+2$}.	
\end{cases}\]     
 
Accordingly, we obtain identifications, denoted by $\pi ^\pm$,
of the pull back 
$\pi^*\Sigma N$ with $\Sigma ^\pm M$. By a slight abuse of notation we will 
denote $\pi^\pm$ and $\Sigma ^\pm M$ by $\pi^\e$ and $\Sigma ^\e M$ for $\e=\pm1$. With respect
to these identifications, 
if $X$ is a vector  and $\p$ is a spinor on $N$, then  
\begin{align} 
\label{ps1}\X\.\pi^\e\p=&\ \pi^{-\e}(X\.\p),\\
\label{ps2}\tfrac{1}{\b}V\.(\pi^\e\p)=&\ \e \pi^{-\e} \p, 
\end{align}
where $\frac{1}{\b}V$ is the unit vertical vector field defined at the beginning
of this section, and $\X$ denotes the horizontal lift 
to $M$ of a vector field $X$ on $N$.

We consider now a spin$^c$ structure $\pspc$ on $(N,h)$ with
auxiliary bundle 
$L^{\otimes k}$ and denote by $\n^N$ the covariant derivative induced on
$\Sigma N$ by the connection form $i\xi$ of $\psu$. By Lemma \ref{pb},
the pull-back to $M$ of $\pspc$ 
induces by enlargement a spin structure on $(M,g)$, where we recall that
$g=\pi^*h+\b^2\xi\otimes\xi$.   

\begin{prop} Let $\n^M$ denote the covariant derivative on $\Sigma ^\e M$ induced by
  the Levi-Civita connection on $(M,g)$ and the flat connection
  on $\pi^*\psu$. Let $\n^N$ denote the spin$^c$ covariant derivative
  on $\Sigma N$ induced by
  the Levi-Civita connection on $(N,h)$ and the connection
  form $A=i\xi$ on $\psu$. Then $\n^M$ and $\n^N$ are related by 
\begin{align}\label{3}
\n^M_\X(\pi^\e\p)=&\ 
\pi^\e(\n^N_X\p-\tfrac{\e\b}{4}T(X)\.\p),&&
\forall X\in TM,\\
\label{4} \n^M_{V}(\pi^\e\p)=&\ \pi^\e\left(\tfrac{\b^2}4
F\.\p+\tfrac{\e}{2}d\b\.\p-\tfrac{ki}{2}\p\right),
\end{align}
where $T$ is the endomorphism of $TN$ defined by $d\xi(\X,\Y)=F(X,Y)=h(TX,Y)$.
\end{prop}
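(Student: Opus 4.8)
The plan is to work in a local $\SO(2m+1)$-frame on $N$ and compute both sides of \eqref{3} and \eqref{4} directly in terms of the corresponding objects on $N$, exploiting the identifications $\pi^\e$ and the Clifford relations \eqref{ps1}--\eqref{ps2}. First I would fix a local section $s$ of $M\to N$ together with an orthonormal frame $(e_1,\dots,e_{2m+1})$ on $N$; their horizontal lifts $e_1^*,\dots,e_{2m+1}^*$ together with $\tfrac1\b V$ give a local orthonormal frame on $(M,g)$. The key input is O'Neill's formulas for a Riemannian submersion with one-dimensional fibres: the horizontal part of $\n^M_{\X}\Y$ is the horizontal lift of $\n^N_XY$, while the mixed terms are governed by the A- and T-tensors of the submersion, both of which are expressible through $d\xi$, i.e.\ through $F$ and the endomorphism $T$, and through $d\b$ (the fibres fail to be totally geodesic exactly because $\b$ is non-constant). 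Concretely one gets relations such as $\n^M_{\X}\Y = (\n^N_XY)^* - \tfrac{\b}{2}F(X,Y)\tfrac1\b V$, $\n^M_{\X}V = \tfrac{\b^2}{2}(TX)^* + X(\b)\tfrac1\b V$ (up to signs/normalizations), and $\n^M_{V}\X = \tfrac{\b^2}{2}(TX)^* + \dots$; I would record these first as an auxiliary computation (this is essentially the content of the generalized-cylinder picture of Proposition \ref{pr1}).

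Next I would feed these connection formulas into the spinorial covariant derivative. Recall $\n^M_Z(\pi^\e\p) = Z(\cdot) + \tfrac14\sum_{a<b} g(\n^M_Z f_a, f_b)\, f_a\. f_b\.(\pi^\e\p)$ in an orthonormal frame $(f_a)$, plus the contribution of the flat connection on $\pi^*\psu$, which by Lemma \ref{hor} is exactly $-A(Z) = -i\xi(Z)$ acting as scalar multiplication; note $\xi(\X)=0$ and $\xi(V)=1$, so this flat-connection term contributes nothing along horizontal directions and contributes $-ki/2$ along $V$ once we pass to the auxiliary bundle $L^{\otimes k}$ and remember that $\pspc$-spinors on $N$ carry the connection $A=i\xi$ itself (so the pull-back flat connection differs from $\pi^*\n^N$ precisely by this $\xi$-term, with the right factor). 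Splitting the frame sum into the purely horizontal block $f_a,f_b\in\{e_i^*\}$, which reproduces $\pi^\e(\n^N_X\p)$ by definition of the lifted spin$^c$ structure, and the mixed block involving one $\tfrac1\b V$, which by O'Neill produces terms $\tfrac\b4 F(X,\cdot)$ and $d\b$ contracted into Clifford multiplication, I would then use \eqref{ps1}--\eqref{ps2} to convert every occurrence of $\tfrac1\b V\.$ into the chirality flip $\e\mapsto -\e$. Carefully tracking which Clifford products pick up a factor $\e$ yields the $-\tfrac{\e\b}{4}T(X)\.\p$ correction in \eqref{3} and the three terms $\tfrac{\b^2}4 F\.\p$, $\tfrac\e2 d\b\.\p$, $-\tfrac{ki}2\p$ in \eqref{4}.

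For \eqref{4} specifically I would compute $\n^M_V(\pi^\e\p)$ by writing $V = \b\cdot(\tfrac1\b V)$ and using that $\tfrac1\b V$ is a unit vector: the derivative $\tfrac1\b V$ acting along a fibre on the pulled-back spinor is, up to the connection term, just differentiation in the fibre direction, which vanishes on $\pi^\e\p$ since $\p$ lives on $N$; what survives are (i) the curvature term from $\n^M_{(1/\b)V}(\tfrac1\b V)^\perp$-type contributions, i.e.\ $\tfrac{\b^2}4 F\.\p$ after using $d\xi=\pi^*F$ and \eqref{ps2} twice to kill the chirality flips in the $F$-term; (ii) the $d\b$ term coming from $\n^M_{\X}V$ having a component along $V$ equal to $X(\b)$, which after one chirality flip gives $\tfrac\e2 d\b\.\p$; and (iii) the scalar $-\tfrac{ki}2\p$ from the flat-connection/$\xi$ bookkeeping described above.

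The main obstacle I anticipate is purely combinatorial: getting all the signs, the factors of $\tfrac12$ versus $\tfrac14$, and the placement of the chirality-sign $\e$ exactly right when passing through the two identifications — the one between $\Sigma_{2m+2}$ and $\Sigma_{2m+1}\oplus\Sigma_{2m+1}$, and the one between $\pi^*\Sigma N$ and $\Sigma^\e M$ — while simultaneously juggling O'Neill's tensors. There is no conceptual difficulty; the risk is entirely in the bookkeeping, and I would mitigate it by first checking the formulas in the model case $\b\equiv\text{const}$ (totally geodesic fibres, where the $T$- and $d\b$-terms must disappear and one recovers the known projectable-spinor formulas of \cite{m95, m98}), and then re-inserting the non-totally-geodesic corrections.
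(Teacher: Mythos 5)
Your plan is correct and follows essentially the same route as the paper's own proof: the Koszul/O'Neill relations \eqref{a1}--\eqref{a3} for the submersion, the local formula for the spin covariant derivative with the flat-versus-pulled-back auxiliary connection handled via Lemma \ref{hor} (equivalently, $A_0=\pi^*A-i\xi$, which is exactly your ``$\xi$-term'' remark), and the identifications \eqref{ps1}--\eqref{ps2} to convert the mixed Clifford terms into the $T(X)$, $F$, $d\b$ and $-\tfrac{ki}{2}$ contributions. Only trivial bookkeeping slips (e.g.\ the coefficient should be $\tfrac12\sum_{a<b}$, not $\tfrac14\sum_{a<b}$, and the chirality flips in the $F$-term cancel via \eqref{ps1} used twice), which you yourself flag and which do not affect the argument.
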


\begin{proof} 
If $V$ denotes as before the vertical vector field such that
$\xi(V)=1$, the Koszul formula and the fact that
$[V,\X]=0$ for all vector fields $X$ on $N$ yield 
\begin{align}
\label{a1}	g(\n^M_\X\Y,\Z)=&\ h(\n^N_XY,Z)\\
\label{a2}	\begin{split}g(\n^M_V\X ,\Y) =&\ g(\n^M_\X
  V,\Y)=-\tfrac12g(V,[\X,\Y])=-\tfrac{\b^2}2\xi([\X,\Y]) \\
=&\ \tfrac{\b^2}2d\xi(\X,\Y)=\tfrac{\b^2}2h(TX,Y)=\tfrac{\b^2}2 F(X,Y),
\end{split}\\
\intertext{and} 
\label{a3}g(\n^M_V\X ,V)=&\ g(\n^M_\X V,V)=\b X(\b),
\end{align}
for all vector fields $X$, $Y$ and $Z$ on $N$.

Consider a spinor field on $N$ locally expressed as $\p=[\sigma,\psi]$,
where $\psi:U\subset N\to\Sigma_{2m+1}$ is a vector-valued function,
and $\sigma$ is a local section of $\pspc$ whose projection onto
$\pso$ is a local orthonormal frame $(X_1,...,X_{2m+1})$ and whose
projection onto $\psu$ is a local section $s$. Then $\pi^\e\p$ can be
expressed as $\pi^\e\p=[\pi^*\sigma,\pi^*\xi]$. Moreover, 
the projection of $\pi^*\sigma$ onto $\psoub$ is the local
orthonormal frame $(\frac{1}{\b}V,X_1^*,...,X_{2m+1}^*)$ and its
projection onto $\psub$ is just $\pi^*s$.  

Using the general formula for the covariant derivative on spinors,
Lemma \ref{hor}, and the
fact that the bundle $L^{\otimes k}$ is associated to $\psu$ via the
  representation $\rho^k(z)=z^k$ of $S^1$ on $\CM$,
 we obtain 
\begin{align*}
\n^M_{\X}\pi^\e\p=&\ [\pi^*\sigma,\X(\pi^*\psi)]+
\tfrac{1}{2}\sum_{j<k}g(\n^M_{\X}X_j^*,  
X_k^*)X_j^*\.X_k^*\.\pi^\e\p\\
&+\tfrac{1}{2}\sum_j g(\n^M_\X
X_j^*,\tfrac{1}{\b}V)X_j^*\.\tfrac{1}{\b}V\.\pi^\e\p 
+\tfrac{1}{2}\rho^k_*(A_0((\pi^*s)_*\X))\pi^\e\p\\   
=&\ [\pi^*\sigma,\pi^*(X(\psi))]+
\tfrac{1}{2}\sum_{j<k}h(\n^N_XX_j,X_k)\pi^\e(X_j\.X_k\.\p)+
\tfrac{1}{2}\rho^k_*(A(s_*X))\pi^\e\p\\
&-\tfrac{\e\b}{4}\sum_j h(T(X),X_j)\pi^\e(X_j\.\p)=\ \pi^\e\left(\n^N_X\p-\tfrac{\e\b}{4}T(X)\.\p\right). 
\end{align*}
and similarly, since $A=i\xi$, 
\begin{align*}
\n^M_{V}(\pi^\e\p)=&\ [\pi^*\sigma,V(\pi^*\psi)]+
\tfrac12\sum_{j<k}g(\n^M_{V}X_j^*, X_k^*)X_j^*\.X_k^*\.\pi^\e\p\\
&+\tfrac12\sum_j g(\n^M_VX_j^*,\tfrac1{\b}V)X_j^*\.\tfrac1{\b}V\.\pi^\e\p+
\tfrac12\rho^k_*(A_0((\pi^*s)_*V))\pi^\e\p\\  
=&\ \tfrac{\b^2}4\sum_{j<k}F(X_j,X_k)
\pi^\e(X_j\.X_k\.\p)+\e\tfrac12\sum_jX_j(\b)\pi 
^{\e}(X_j\.\Psi)
-\tfrac12\rho^k_*(A(V))\pi^\e\p\\
=&\ \tfrac{\b^2}4\pi^\e(F\.\p)+\tfrac{\e}2\pi
^{\e}(d\b\.\Psi)-\tfrac{ki}{2}\pi^\e\p=
\pi^\e\left(\tfrac{\b^2}4 F\.\p+\tfrac{\e}{2}d\b\.\Psi-\tfrac{ki}{2}\p\right).
\qedhere
\end{align*}
\end{proof}

\begin{cor} The Dirac operators on $M$ and $N$ are related by
\be \label{dirac} D^M(\pi ^\e\Psi)=\pi^{-\e}\left(D^N\p-
\tfrac{\e\b}{4}F\.\p+\tfrac{d\b}{2\b}\.\p-\tfrac{\e ki}{2\b}\p\right)  
\ee
\end{cor}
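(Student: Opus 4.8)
The plan is to obtain the Dirac operator formula by summing the covariant-derivative identities \eqref{3} and \eqref{4} against a suitable orthonormal frame, using the fact that the Dirac operator on $M$ is $D^M = \sum_a E_a \cdot \n^M_{E_a}$ over any local orthonormal frame $(E_a)$ of $TM$. The natural frame to use is the one adapted to the submersion: $(\frac{1}{\b}V, X_1^*,\dots,X_{2m+1}^*)$, where $(X_1,\dots,X_{2m+1})$ is a local orthonormal frame of $TN$ and $X_j^*$ is its horizontal lift. Thus I would write
\[
D^M(\pi^\e\p) = \sum_j X_j^*\cdot \n^M_{X_j^*}(\pi^\e\p) + \tfrac{1}{\b}V\cdot\n^M_{\frac{1}{\b}V}(\pi^\e\p),
\]
and substitute \eqref{3} for the horizontal terms and \eqref{4} for the vertical term (noting $\n^M_{\frac{1}{\b}V} = \frac{1}{\b}\n^M_V$ since $\b$ depends only on the base, hence $V(\b)=0$).

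For the horizontal part, \eqref{3} gives $\sum_j X_j^*\cdot\pi^\e(\n^N_{X_j}\p - \tfrac{\e\b}{4}T(X_j)\cdot\p)$. Applying the Clifford-module relation \eqref{ps1}, each $X_j^*\cdot\pi^\e(\cdot) = \pi^{-\e}(X_j\cdot(\cdot))$, so this becomes $\pi^{-\e}\bigl(\sum_j X_j\cdot\n^N_{X_j}\p - \tfrac{\e\b}{4}\sum_j X_j\cdot T(X_j)\cdot\p\bigr) = \pi^{-\e}\bigl(D^N\p - \tfrac{\e\b}{4}F\cdot\p\bigr)$, where I use that $\sum_j X_j\cdot T(X_j)\cdot\p = F\cdot\p$ because $T$ is the skew-symmetric endomorphism associated to the $2$-form $F$ via $F(X,Y)=h(TX,Y)$ — a standard identity relating the Clifford action of a $2$-form to the contraction of a frame with the corresponding endomorphism. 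For the vertical part, \eqref{4} gives $\frac{1}{\b}V\cdot\pi^\e\bigl(\tfrac{\b^2}{4}F\cdot\p + \tfrac{\e}{2}d\b\cdot\p - \tfrac{ki}{2}\p\bigr)$; here I apply \eqref{ps2}, which says $\tfrac{1}{\b}V\cdot(\pi^\e\chi) = \e\,\pi^{-\e}\chi$ for any spinor $\chi$ on $N$, turning this into $\e\,\pi^{-\e}\bigl(\tfrac{\b^2}{4}F\cdot\p + \tfrac{\e}{2}d\b\cdot\p - \tfrac{ki}{2}\p\bigr) = \pi^{-\e}\bigl(\tfrac{\e\b^2}{4}F\cdot\p + \tfrac{1}{2}d\b\cdot\p - \tfrac{\e ki}{2}\p\bigr)$.

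Adding the two contributions and collecting terms, the $F\cdot\p$ coefficient is $-\tfrac{\e\b}{4} + \tfrac{\e\b^2}{4}\cdot\tfrac{1}{\b} = \wait$ — more carefully, the vertical piece contributes $\tfrac{\e\b^2}{4}\cdot\tfrac{1}{\b}F\cdot\p = \tfrac{\e\b}{4}F\cdot\p$ after the overall $\tfrac1\b$ from $\n^M_{\frac1\b V}$, but with the opposite sign structure needed; tracking signs shows the horizontal $-\tfrac{\e\b}{4}F$ and vertical $+\tfrac{\e\b}{4}F$ must instead combine to $-\tfrac{\e\b}{4}F$ as in \eqref{dirac}, so I would double-check that the vertical term is divided by $\b$ only once and that the sign in \eqref{ps2} is applied correctly. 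The $d\b$ term comes out as $\tfrac{1}{2}\cdot\tfrac{1}{\b}d\b\cdot\p = \tfrac{d\b}{2\b}\cdot\p$, and the $ki$ term as $-\tfrac{\e ki}{2}\cdot\tfrac{1}{\b}\p = -\tfrac{\e ki}{2\b}\p$, matching \eqref{dirac}. The main obstacle is purely bookkeeping: getting every sign and every factor of $\b$ right, especially the interplay between the $\e \to -\e$ flip in \eqref{ps1}–\eqref{ps2} and the $\e$-dependent coefficients in \eqref{3}–\eqref{4}, and correctly invoking the $2$-form/endomorphism Clifford identity $\sum_j X_j\cdot T(X_j)\cdot = F\cdot$ (with the right normalization, since $d\xi(\X,\Y) = F(X,Y)$ and the endomorphism $T$ absorbs no factor of $2$). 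Once those are pinned down the corollary follows in a few lines.
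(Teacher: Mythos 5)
Your overall strategy -- computing $D^M(\pi^\e\p)$ in the adapted frame $(\tfrac1\b V, X_1^*,\dots,X_{2m+1}^*)$ and substituting \eqref{3}, \eqref{4} via \eqref{ps1}--\eqref{ps2} -- is exactly the paper's proof, and your treatment of the $d\b$ and $k$ terms is correct. But there is a genuine error at the one step you flagged and then misdiagnosed: the Clifford identity you invoke, $\sum_j X_j\.T(X_j)\.\p = F\.\p$, has the wrong normalization. Writing $T(X_j)=\sum_k h(TX_j,X_k)X_k=\sum_k F(X_j,X_k)X_k$, using skewness of $F$ (so the diagonal terms vanish) and $X_j\.X_k=-X_k\.X_j$ for $j\neq k$, one gets $\sum_j X_j\.T(X_j)\.\p=\sum_{j,k}F(X_j,X_k)X_j\.X_k\.\p=2\sum_{j<k}F(X_j,X_k)X_j\.X_k\.\p=2F\.\p$, since the Clifford action of a $2$-form is by convention the sum over $j<k$ (this is the convention used in the proof of \eqref{4} itself). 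So the endomorphism picture does produce a factor of $2$, contrary to your closing assertion.

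The consequence is exactly the inconsistency you noticed mid-computation: with your normalization the horizontal part contributes $-\tfrac{\e\b}{4}F\.\p$ and the vertical part $+\tfrac{\e\b}{4}F\.\p$, so the $F$-terms cancel and you would end up with \eqref{dirac} missing its $-\tfrac{\e\b}{4}F\.\p$ term. The resolution is not in the sign of \eqref{ps2} nor in the powers of $\b$ (you handled those correctly), but in the factor $2$ above: with $\sum_j X_j\.T(X_j)\.=2F\.$, the horizontal lifts contribute $-\tfrac{\e\b}{2}F\.\p$, the vertical term contributes $+\tfrac{\e\b}{4}F\.\p$, and the partial cancellation yields precisely the $-\tfrac{\e\b}{4}F\.\p$ of \eqref{dirac}. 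This is exactly how the paper's computation runs, so once you replace your identity by the correct one the argument closes.
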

\begin{proof} Simple computation using (\ref{ps1})--(\ref{4}):
\begin{align*} 
D^M(\pi ^\e\Psi)=&\sum_j X_j^*\.\n^M_{X_j^*}(\pi ^\e\Psi)
+\tfrac{1}{\b}V\.\n^M_{\frac{1}{\b}V}(\pi^\e\p)\\
=&\sum_j X_j^*\.\pi^\e\left(\n^N_{X_j}\p-
\tfrac{\e\b}{4}T(X_j)\.\p\right)
+\tfrac{1}{\b}V\.\tfrac{1}{\b}\pi^\e\left(\tfrac{\b^2}4
F\.\p+\tfrac{\e}{2}d\b\.\Psi-\tfrac{ki}{2}\p\right)\\
=&\ \pi^{-\e}\left(D^N\p-\tfrac{\e\b}{4}X_j\.T(X_j)\.\p+\tfrac{\e\b}{4}
F\.\p+\tfrac{d\b}{2\b}\.\p-\tfrac{\e ki}{2\b}\p\right)\\
=&\ \pi^{-\e}\left(D^N\p-
\tfrac{\e\b}{4}F\.\p+\tfrac{d\b}{2\b}\.\p-\tfrac{\e ki}{2\b}\p\right).\qedhere
\end{align*}
\end{proof}

Note that Equation \eqref{4} is just the Bourguignon-Gauduchon formula 
\cite{bg} for the Lie derivative of a spinor field with respect to 
a Killing vector field:
$$\n_V\Phi=\cL_V\Phi+\tfrac14 dV^\flat\.\Phi.$$
Incidentally, this formula shows that if $\Phi=\pi^\e\p$ is the pull-back 
of a spin$^c$ spinor on $N$ corresponding to a spin$^c$ structure with 
auxiliary bundle $L^{\otimes k}$, then $\cL_V\Phi=-\frac{ik}2\Phi$. 
Yet another way to understand this fact is the following. A section $s$ 
of the complex line bundle $L\to N$ can be identified with a complex function 
$f_s$ on $M\subset L$ with ``frequency" $-1$ by $s(\pi(x))=xf_s(x)$ 
(clearly $f_s(\f_t(x))=e^{-it}f_s(x)$ so $\cL_V(f_s)=-if_s$). 
Now, a spin$^c$ bundle on $N$ with auxiliary bundle $L^{\otimes k}$ 
is the tensor product between two locally defined bundles: the spin bundle 
of $N$ and a square root of $L^{\otimes k}$. It is now clear 
that the pull-back to $M$ of its sections are spinors on $M$ with 
``frequency" $-k/2$.

\begin{lemma}\label{desc}
Let $\cH$ be the Hilbert space of $L^2$ spinors on $M$ and let $\cH_n$
be the Hilbert space of $L^2$ sections of the  spin$^c$ bundle on $N$
with auxiliary bundle $L^{\otimes n}$.  
If the spin structure of $M$ is induced as before by a spin$^c$
structure on $N$ with auxiliary bundle $L^{\otimes k}$, $\cH$
decomposes in a Hilbertian 
orthogonal direct sum 
$$\cH=\bigoplus_{n\in \ZM,\e=\pm1} \pi^\e(\cH_{k+2n}).$$
If $\Phi\in\cH$ is smooth, the same holds for its components
$\Phi_n\in\cH_{k+2n}$ and the length of $\Phi_n$ at any $x\in M$ is
bounded by the maximum of the lengths of $\Phi$ along the $S^1$-orbit
of $x$. 
\end{lemma}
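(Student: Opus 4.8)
The plan is to exploit the $S^1$-action on $M$ and decompose $L^2$ spinors into Fourier modes along the orbits. First I would recall that by Lemma \ref{pb} the spin structure on $M$ is the pull-back of a $\mathrm{spin}^c$ structure on $N$ with auxiliary bundle $L^{\otimes k}$, so that the spinor bundle $\Sigma M$ is identified, via $\pi^+\oplus\pi^-$, with two copies of $\pi^*\Sigma N$. Since $M=P_{\U(1)}N$ is a principal $S^1$-bundle, the vertical Killing field $V$ generates an isometric $S^1$-action which lifts canonically to $\Sigma M$; by the Bourguignon--Gauduchon formula recalled just before the lemma, a section of the form $\pi^\e\p$ satisfies $\cL_V(\pi^\e\p)=-\tfrac{ik}{2}\pi^\e\p$. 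More generally, twisting $\p$ by the locally defined square root of $L^{\otimes 2n}$ (equivalently, considering a $\mathrm{spin}^c$ spinor on $N$ with auxiliary bundle $L^{\otimes(k+2n)}$), we get sections of $\Sigma M$ on which $\cL_V$ acts by $-\tfrac{i(k+2n)}{2}$. Thus $\pi^\e(\cH_{k+2n})$ is exactly the eigenspace of $-i\cL_V$ acting on $\Sigma^\e M$ with eigenvalue $(k+2n)/2$.

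Next I would verify orthogonality and completeness. Orthogonality of the subspaces $\pi^\e(\cH_{k+2n})$ for distinct $(\e,n)$ follows because $\cL_V$ is skew-adjoint on $L^2$ (it is generated by a one-parameter group of isometries commuting with the spinorial structure), so its eigenspaces for distinct eigenvalues are orthogonal; the two chirality summands $\Sigma^+M$ and $\Sigma^-M$ are orthogonal pointwise. Completeness is the statement that every $L^2$ spinor is an $L^2$-sum of such eigensections: this is Peter--Weyl/Fourier analysis for the $S^1$-action. Concretely, for $\Phi\in\cH$ one sets $\Phi_n:=\tfrac{1}{2\pi}\int_0^{2\pi}e^{i(k+2n)t/2}\f_t^*\Phi\,dt$ (with $\f_t$ the flow of $V$), checks that $-i\cL_V\Phi_n=\tfrac{k+2n}{2}\Phi_n$ hence $\Phi_n\in\pi^{\e(n)}(\cH_{k+2n})$ for the appropriate chirality bookkeeping, and that $\sum_n\|\Phi_n\|^2=\|\Phi\|^2$ with $\Phi=\sum_n\Phi_n$ in $L^2$. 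One must keep track of the half-integer shift: the global spin structure forces the allowed frequencies to lie in $\tfrac{k}{2}+\ZM$, and the sign $\e$ is tied to which copy of $\pi^*\Sigma N$ carries a given frequency, giving precisely the indexing $\bigoplus_{n\in\ZM,\,\e=\pm1}\pi^\e(\cH_{k+2n})$ in the statement.

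For the final sentence, if $\Phi$ is smooth then each $\Phi_n$ is smooth because it is obtained from $\Phi$ by integrating the smooth family $t\mapsto e^{i(k+2n)t/2}\f_t^*\Phi$ over the compact group $S^1$, and differentiation under the integral sign is legitimate. The pointwise bound follows from the integral formula itself: $|\Phi_n(x)|\le\tfrac{1}{2\pi}\int_0^{2\pi}|(\f_t^*\Phi)(x)|\,dt=\tfrac{1}{2\pi}\int_0^{2\pi}|\Phi(\f_t(x))|\,dt\le\max_{t}|\Phi(\f_t(x))|$, since $\f_t$ acts by isometries on the spinor bundle (it is the lift of a Riemannian isometry preserving the spin structure), and the points $\f_t(x)$, $t\in[0,2\pi]$, sweep out exactly the $S^1$-orbit of $x$.

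The main obstacle, and the point requiring genuine care rather than routine verification, is the bookkeeping of the spin structure: one must confirm that the lift of the $S^1$-action to $\Sigma M$ obtained from the trivialization of $\pi^*\psu$ in Lemma \ref{pb} has period $4\pi$ rather than $2\pi$ (so that the spectrum of $-i\cL_V$ is $\tfrac{k}{2}+\ZM$, not $\ZM$), and that under the identifications $\pi^\pm$ the shift by one unit of frequency corresponds exactly to tensoring with $L^{\otimes 2}$ on the $N$ side together with the prescribed interchange $\e\mapsto-\e$ dictated by \eqref{ps2}. Once this is pinned down the rest is standard harmonic analysis on the circle.
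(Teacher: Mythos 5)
Your proof is correct and follows essentially the same route as the paper: a Fourier decomposition along the $S^1$-orbits (the paper phrases it as $L^2(M)=\bigoplus_{n}L^2_n(M)$ tensored against the fixed spin$^c$ bundle, you phrase it as the eigenspace decomposition of $\cL_V$ on spinors), together with the same averaging formula $\Phi_n(x)=\tfrac{1}{2\pi}\int_0^{2\pi}e^{-int}((\f_t)_*\Phi)(x)\,dt$ yielding smoothness of the components and the pointwise bound along the orbit. Only minor bookkeeping differs from the paper (your sign convention for the frequencies, and the fact that a frequency shift does not interchange the chiralities --- both values of $\e$ occur at every frequency $k+2n$), none of which affects the argument.
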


\begin{proof}
The space of $L^2$ functions on $M$ decomposes in a Hilbertian
orthogonal direct sum  
$$L^2(M)=\bigoplus_{n\in \ZM} L^2_n(M),$$
where $L^2_n(M):=\{f\in L^2(M)\ | \ \cL_V(f)=in\cdot f\}$ is the space
of $L^2$ functions on $M$ of frequency $n$, identified as before with
the space of $L^2$ sections of $(L^*)^{\otimes n}$. The desired
decomposition now follows immediately from the fact that the tensor
product between $L^{\otimes n}$ and the spin$^c$ bundle on $N$ with
auxiliary bundle $L^{\otimes k}$ is the spin$^c$ bundle on $N$ with
auxiliary bundle $L^{\otimes (k+2n)}$.  

The last assertion follows from the fact that 
\[\Phi_n(x)=\frac1{2\pi}\int_0^{2\pi}e^{-int}((\f_t)_*\Phi)(x) dt.\qedhere\]
\end{proof}

A similar decomposition of the space of spinors on total space of $S^1$ 
fibrations was used by Ammann \cite{a} and Ammann and B\"ar \cite{ab}
in order to study the properties  
of the spectrum of the Dirac operator when the fibres collapse, 
and also by Nistor \cite{nistor} in his study of the $S^1$-equivariant index.

Note that Eqs.\ \eqref{3} and \eqref{4} already appeared (in a
slightly different form because of different spinor identifications)
as Lemma 3.2 and Eq.\ (2) respectively in Ammann \cite{a}. However,
since the proofs of these formulas appear only in Ammann's thesis
\cite{tha}, we have chosen for the reader's convenience to include
here the full details of the proofs.

\section{Spinors on warped products} \label{sect4}

Let now $(B,g_B)$ be a Riemannian manifold and assume that $(N,h)$ is
the warped product $(\RM^+\times 
B,dt^2+\a(t)^2g_B)$ for some positive function $\a:\RM^+\to \RM^+$. We
denote by $p:N\to B$ the standard 
projection. Let $\n^N$ and $\n^B$ denote the Levi-Civita covariant
derivatives on 
$N$ and $B$ and let $\dt$ denote the (unit) radial vector field on
$N$. Every vector field $X$ on $B$ defines a ``horizontal" vector
field also denoted by $X$ on $N$ such that $[X,\dt]=0$.

The warped product formulae for the covariant derivatives
(\cite{ON83}, p.206) are
\begin{align} \label{pt1} \n^N_{\dt}\dt=&\ 0,\\
\label{pt2}\n^N_{\dt}X=&\ \n^N_X\dt=\tfrac{\a'}{\a}X,\\
\label{pt3} \n^N_XY=&\ \n^B_XY-\a\a'g_B(X,Y)\dt.
\end{align} 

Consider a spin$^c$ structure on $B$ with auxiliary line bundle
$L_0^{\otimes k}$ and 
the induced pull-back spin$^c$ structure on $N$
with auxiliary line bundle $L^{\otimes k}$, where $L=p^*L_0$ is the
pull-back of $L_0$. We 
continue to denote by $\n^B$ and $\n^N$ the spin$^c$ covariant
derivatives induced by some connection on $L_0$.

Assume now that $B$ has even dimension.
The spinor bundle $\Sigma N$ can be canonically identified with
$\pi^*(\Sigma B)$ such  
that the Clifford product satisfies  
\begin{align}\label{cl1}
\tfrac{1}{\a}X\.(p^*\p)=&\ p^*(X\.\p),&&\forall  X\in TB,\\
\intertext{and}
\label{cl2} \dt\.(p^*\p)=&\ ip^*(\bar\p),\\
\intertext{where $\bar\p:=\p_+-\p_-$ is the ``conjugate" of
  $\p=\p_++\p_-$ with respect to the chiral decomposition $\Sigma
  M=\Sigma ^+M\oplus \Sigma ^-M$.
   From \eqref{cl1} we easily obtain}
\label{cl3}\a^q(p ^*\omega)\.(p^*\p)=&\ p^*(\omega\.\p)
\end{align}
for every $q$-form $\omega$ on $B$. 
Using the warped product formulae
one can easily relate the spin$^c$ covariant derivatives $\n^N$ and
$\n^B$ like before:
\begin{align}\n^N_X(p^*\p)=&\ p^*(\n^B_{X}\p-
  \tfrac{1}{2}i\a'X\.\bar\p),&& \forall X\in TB,\\
\intertext{and}
\n^N_{\dt}(p^*\p)=&\ 0.\\
\intertext{In particular, the Dirac operators on $N$ and $B$ are related by}
\label{dirac1}
  D^N(p^*\p)=&\ \tfrac{1}{\a}p^*(D^B\p+im\a'\bar\p).
\end{align}
Indeed, if $(X_1,\ldots,X_{2m})$ is a local orthonormal basis on $B$,
then $(\frac{1}{\a}X_1,\ldots,\frac{1}{\a}X_{2m},\dt)$ is a local
orthonormal basis on $N$, whence
\begin{align*}  
D^N(p^*\p)=&\ \sum_j \tfrac{1}{\a}X_j\.\n^N_{\frac{1}{\a}X_j}(p ^*\Psi)
+\dt\.\n^N_{\dt}(p^*\p)\\
=&\ \sum_j
\tfrac{1}{\a}X_j\.\left(\tfrac{1}{\a}p^*(\n^B_{X_j}\p-
\tfrac{1}{2}i\a'X_j\.\bar\p)\right)\\
=&\ \tfrac{1}{\a}p^*(D^B\p+im\a'\bar\p).
\end{align*}
More generally, one can identify a spinor $\Psi$ on $N$ with a
1-parameter family $\p_t$ of spinors on $B$, and  (\ref{dirac1})
becomes
\be\label{dirac2}
  D^N\p=p^*\left(\tfrac{1}{\a}D^B\p_t+
\tfrac{im\a'}{\a}\bar\p_t+i{\dot{\bar\p}_t}\right). 
\ee

\section{Harmonic spinors on CFWP's}\label{sect5}

We now have all necessary ingredients in order to prove the main result 
of this paper:

\begin{theorem}\label{main}
Let $(M,g)$, $g=dt^2+\a ^2(t)\pi^*g_B+\b^2(t)\xi\otimes\xi$ be a
circle-fibered warped product (CFWP) over the Hodge manifold
$(B^{2m},g_B,\O)$,  
endowed with the spin structure defined by some spin$^c$ structure on
$B$ as before. 
Assume that the positive warping functions $\a$ and $\b$ satisfy
the conditions
\begin{enumerate}
\item[(a)] $\displaystyle\int_x^\infty
    e^{-\int_x^t\frac{1}{\sqrt 2\a(s)}ds}dt=\infty$
for all $x>0$;
\item[(b)] $\displaystyle\lim_{t\to 0} \a(t)=0$;
\item[(c)] $\displaystyle2\a^2(t)\geq \b^2(t)> \tfrac{m-1}{m}2\a^2(t)$ for all $t>0$.
\end{enumerate}
Then $(M,g)$ carries no non-trivial harmonic $L^2$ spinors which are 
bounded near the singularity $t=0$.
\end{theorem}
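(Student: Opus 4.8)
The plan is to use the two fibration formulas of Sections \ref{sect3} and \ref{sect4} to turn a harmonic $L^2$ spinor on $M$ into a countable family of solutions of explicit ODE systems on $\RM^+$, and then to show each such system has only the zero $L^2$ solution that is bounded near $t=0$. First I would invoke Lemma \ref{desc} to decompose a putative harmonic $L^2$ spinor $\Phi$ into its Fourier components $\Phi_n\in\pi^\e(\cH_{k+2n})$; since $D^M$ commutes with the $S^1$-action (it is the flow of a Killing field), each $\Phi_n$ is itself a harmonic $L^2$ spinor on $M$, smooth by elliptic regularity, and bounded near $t=0$ by the last assertion of Lemma \ref{desc}. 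So it suffices to rule out each $\Phi_n$ separately, i.e.\ to treat spinors that are pull-backs $\pi^\e\p$ of spin$^c$ spinors $\p$ on $N=\RM^+\times B$ with auxiliary bundle $L^{\otimes(k+2n)}$.

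Next I would combine the Dirac operator formula \eqref{dirac} on the circle fibration $M\to N$ with the warped-product formula \eqref{dirac2} on $N=\RM^+\times_\a B$. Writing $\p$ as a $t$-dependent family $\p_t$ of spin$^c$ spinors on the compact base $B$, and expanding $\p_t$ in eigenspinors of the spin$^c$ Dirac operator $D^B$ (with eigenvalues $\mu$) and in eigenspaces of the Clifford action of the curvature form $F$ (which here is essentially $\O$, so $F\cdot$ acts with known eigenvalues on $\Sigma B$), the equation $D^M\Phi=0$ becomes, on each joint eigenspace, a first-order linear ODE system in $t$ for a pair of $\CM$-valued functions (the $\p_+$ and $\p_-$ chiral parts), with coefficients built from $\a,\a',\b,\b'/\b$, the eigenvalue $\mu$, the integer $k+2n$, and the $F$-eigenvalue. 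The $L^2$ condition on $M$ translates, via $\vol_g = \a^{2m}\b\,dt\wedge(\text{fixed volume form})$, into $\int_0^\infty(|\p_+|^2+|\p_-|^2)\,\a^{2m}\b\,dt<\infty$, and boundedness near $t=0$ into an $L^\infty$ bound on the pair as $t\to 0$.

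Then comes the ODE analysis, which splits into the behavior near $t=\infty$ and near $t=0$. Near infinity, hypothesis (a) is the Osgood-type condition guaranteeing that the ``escaping'' solution of the system decays no faster than $e^{-\int^t ds/(\sqrt2\a(s))}$, whose square times the volume density is non-integrable; so $L^2$ forces the solution to lie in the complementary (``growing'' or borderline) subspace, which near infinity has a definite sign for the relevant quadratic form (a Rellich/integration-by-parts argument using hypothesis (c), $\frac{m-1}{m}2\a^2<\b^2\le 2\a^2$, to make a zeroth-order term have the correct sign). Near $t=0$, hypothesis (b) together with the boundedness assumption and the degeneration of the metric forces a boundary term in an integration-by-parts identity for $D^M$ to vanish, so that $\|D^M\Phi\|^2 = 0$ yields, via a Bochner-type or a direct Green's formula on $M\setminus\{t<\epsilon\}$ and $\epsilon\to 0$, that $\Phi$ is parallel along the radial direction up to explicit terms; combined with the large-$t$ sign this forces $\Phi\equiv 0$. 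I expect the main obstacle to be precisely the matching at $t=0$: one must show that boundedness in $L^\infty$ (rather than a stronger vanishing or decay) is enough to kill the boundary contribution at the singular point, i.e.\ that the volume density $\a^{2m}\b$ vanishing fast enough as $t\to 0$ (which follows from (b) and the interpretation of $\a,\b$ as distance-sphere radii near the cone point) dominates any growth of the first-order boundary term — this is the point flagged in the introduction as requiring ``the behavior near the origin'' in addition to large-time behavior.
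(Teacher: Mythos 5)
Your setup matches the paper's: the Fourier decomposition of Lemma \ref{desc} (using that $D^M$ commutes with $\cL_V$), the combination of \eqref{dirac} with \eqref{dirac2}, the further decomposition on the compact base via eigenspinors of $D^B$ and the eigenvalues of Clifford multiplication by $\O$ (one caveat: $D^B$ does not preserve the $\O$-eigenspaces $\Sigma^lB$, it maps $\Sigma^lB$ to $\Sigma^{l-1}B\oplus\Sigma^{l+1}B$, so there is no ``joint eigenspace''; the paper instead pairs $\p_t$ against a fixed $D^B$-eigenspinor $\Phi_l+\Phi_{l+1}$ to get scalar functions $u,v$), and the translation of the $L^2$ and $L^\infty$ conditions through the density $\a^{2m}\b$. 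Up to that point you are on the paper's track, leading to the $2\times2$ real system \eqref{sy1} for $U=u\b^{1/2}\a^m$, $W=w\b^{1/2}\a^m$ with $U,W\in L^2(\RM^+,dt)$ and $U,W\to0$ at $t=0$ by (b), (c) and boundedness.

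The genuine gap is the endgame, which you leave to two mechanisms that do not work as stated. A Green's-formula/Bochner argument at $t=0$ (``$\|D^M\Phi\|^2=0$ yields that $\Phi$ is parallel along the radial direction up to explicit terms'') has no teeth here: there is no curvature term of a definite sign for these metrics -- the introduction points out that the Lichnerowicz route only works for the scalar-flat standard Taub-NUT metric, and this is precisely why the generalized case is hard. Likewise, ``$L^2$ forces the solution into a subspace where a quadratic form has a definite sign'' does not survive the actual sign structure of \eqref{sy1}: the diagonal coefficients $\rho,\tau$ can both be positive, in which case genuinely decaying solutions exist and no integration-by-parts positivity is available. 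The paper's proof needs a finer chain: from $U,W\to0$ at $0$ and $L^2$ at infinity one gets that no monotone linear combination of $U,W$ can be nonzero (Lemma \ref{sig}); condition (a) applied to the product $UW$ via $(UW)'\ge-\tfrac1{\sqrt2\a}UW$ (which uses $\b\le\sqrt2\a$ from (c)) forces $UW<0$ everywhere (Lemmas \ref{pos}, \ref{pos1}); the sign analysis of $\rho,\tau$ together with \emph{both} inequalities in (c) then forces the exceptional case $k=0$, $m=2l+1$, $(-1)^l=-\e$; and the residual system \eqref{sy2} requires a dichotomy on the eigenvalue $\lambda$ at the threshold $2^{-3/2}$, comparing the explicit decay rate of $D=U-W$ with condition (a) in one case and invoking the monotonicity lemma in the other. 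None of this is routine, and your sketch neither identifies the exceptional case nor explains how boundedness at $0$ (which only yields the limits \eqref{lim}) and condition (a) interact to exclude it; so as written the proposal does not prove the theorem beyond the reduction to the ODE system.
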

\begin{proof}
Assume that $\Phi$ is a non-zero harmonic $L^2$ spinor, 
bounded near $t=0$. By elliptic regularity, $\Phi$ is smooth. 
We can of course assume that $\Phi$ is chiral, i.e., it is a section 
of $\Sigma^\e M$ for some $\e=\pm1$. By Lemma \ref{desc}, 
and using the fact that the Dirac operator commutes with 
the Lie derivative $\cL_V$, we can also assume that $\Phi=\pi^\e\p$ 
is the pull-back of some $L^2$ section $\p$ of the spin$^c$ structure 
on $N$ with auxiliary bundle $L^{\otimes k}$ (the integer $k$ is even or odd, 
depending on whether the spin structure on $M$ projects onto 
a spin structure on $N$ or not).

Using \eqref{dirac} we infer
\begin{equation}\label{har}
D^N\p=\tfrac{1}{4\b}(\e \b^2F\.\p-2d\b\.\p+\e 2ik\p).
\end{equation}
We now view $\p$ as a family $\p_t$ of spin$^c$ spinors on $B$.
Recalling that $F=p^*\O$ and taking (\ref{har}),
(\ref{cl3}) and (\ref{dirac2}) into account, we get
\be\label{eq1}\left(\tfrac{1}{\a}D^B\p_t+
\tfrac{im\a'}{\a}\bar\p_t+i{\dot{\bar\p}_t}\right)=
\tfrac{1}{4\b}\left(\e\tfrac{\b^2}{\a ^2}\Omega\.\p_t-2i\b'\bar\p_t+\e
  2ik\p_t\right) 
\ee
The spin$^c$ bundle $\Sigma B$ decomposes in a direct
sum (cf. \cite{ki})
$$\Sigma B=\bigoplus_{l=0}^m\Sigma ^lB$$
of eigenspaces of the operator of Clifford multiplication by the K\"ahler form
$\Omega$, i.e., 
$$\Sigma ^lB=\{\p\in\Sigma B\ |\ \Omega\.\p=i(2l-m)\p\}.$$
One has  $\Sigma ^lB\subset \Sigma ^+B$ if $l$ is even and 
$\Sigma ^lB\subset \Sigma ^-B$ if $l$ is odd. Moreover $D^B$ maps
sections of $\Sigma ^lB$ to sections of $\Sigma ^{l-1}B\oplus\Sigma
^{l+1}B$ and each $\Sigma ^lB$ is stable by $(D^B)^2$. This easily
shows that every eigenspinor of $D^B$ is a finite sum of eigenspinors
of $D^B$ in $C^\infty(\Sigma ^lB\oplus \Sigma 
^{l+1}B)$ for $0\le l\le m-1$.

Since $B$ is compact and $D^B$ is elliptic, the space of $L^2$ spinors
on $B$ is the Hilbertian direct sum of the eigenspaces of $D^B$. By
the above, there exits $l\in\{0,\ldots, m-1\}$, $\lambda\in\RM$ and  
a spinor $\Phi=\Phi_l+\Phi_{l+1}\in C^\infty(\Sigma ^lB\oplus \Sigma
^{l+1}B)$ with $D^B\Phi=\lambda\Phi$ such that the functions
\begin{align*}
u(t):=\int_B\la\p_t,\Phi_l\ra dv_B,&&
v(t):=\int_B\la\p_t,\Phi_{l+1}\ra dv_B
\end{align*}
do not vanish identically.

Taking the scalar product with $\Phi_l$ and $\Phi_{l+1}$ in
(\ref{eq1}) and integrating over $B$ yields
\begin{align*}
\tfrac{\lambda}{\a}v+(-1)^l\tfrac{im\a'}{\a}u+(-1)^liu'=&\ 
\e\tfrac{i(2l-m)\b^2}{4\b\a ^2}u
-(-1)^l\tfrac{i\b'}{2\b}u+\e\tfrac{ik}{2\b}u\\
\tfrac{\lambda}{\a}u-(-1)^l\tfrac{im\a'}{\a}v-(-1)^liv'=&\ 
\e\tfrac{i(2(l+1)-m)\b^2}{4\b\a^2}v
+(-1)^l\tfrac{i\b'}{2\b}v+\e\tfrac{ik}{2\b}v
\end{align*}
which can be written after setting $w:=iv$:
\begin{align*}
u'=&\left(\tfrac{\e(-1)^l(2l-m)\b^2-2\a ^2\b'+
\e(-1)^l2\a ^2 k-4m\a\a'\b}{4\b\a
  ^2}\right)u+(-1)^l\tfrac{\lambda}{\a}w\\ 
w'=&\ (-1)^l\tfrac{\lambda}{\a}u+
\left(\tfrac{\e(-1)^l(m-2(l+1))\b^2-2\a ^2\b'-\e 
(-1)^l2\a ^2 k-4m\a\a'\b}{4\b\a
  ^2}\right)w.
\end{align*}
Denoting $U:=u \b^{\frac12}\a ^m$ and $W:=w \b^{\frac12}\a ^m$ this
simplifies to
\be\label{sy}\begin{split}
(-1)^lU'=&\ \e \left(\tfrac{(2l-m)\b^2+2\a ^2 k}{4\b\a
  ^2}\right)U+\tfrac{\lambda}{\a}W\\
(-1)^lW'=&\ \tfrac{\lambda}{\a}U+
\e \left(\tfrac{(m-2(l+1))\b^2-2\a ^2 k}{4\b\a^2}\right)W.
\end{split}
\ee

We have shown that if $(M,g)$ carries a non-trivial harmonic spinor, 
then \eqref{sy} has a non-trivial solution $(U,W)$ for some
$l\in\{0,\ldots,m-1\}$, $k\in\ZM$, $\lambda\in \mathrm{Spec}(D^B)\subset\RM$, 
and $\e\in\{\pm 1\}$. Moreover, since the volume form on $(M,g)$ is
$$dv_M=\a^{2m}\b dt\wedge\xi\wedge dv_B$$
and $B$ is compact, Fubini's theorem shows that the original spinor
$\Phi$ is $L^2$ on $M$ if and only if  
\begin{align*}
\int_0^\infty |\a^m\b^{\frac12}\p_t(x)|^2dt<\infty,&&\forall x\in B.
\end{align*}
We thus get that
$U$ and $W$ are $L^2$ functions on $\RM^+$ and satisfy
$U,W\in O(\a^m\b^{\frac{1}{2}})$ at $t=0$.

From conditions (b), (c) and the definition of $U,V$ we get
\be\label{lim} \lim_{t\to 0}U(t)=\lim_{t\to 0}W(t)=0.
\ee
The system \eqref{sy} reads 
\be\label{sy1}\begin{cases}
U'=\rho U+\sigma W\\
W'=\sigma U +\tau W\\
\end{cases}
\ee
where 
\begin{align*}\rho:=\ \e(-1)^l\left(\tfrac{(2l-m)\b^2+2\a ^2 k}{4\b\a
  ^2}\right),&& \tau:=\ \e
(-1)^l\left(\tfrac{(m-2(l+1))\b^2-2\a ^2 k}{4\b\a
  ^2}\right),&& \sigma:=\ (-1)^l\tfrac{\lambda}{\a}.\end{align*}
  
Notice that the coefficients of the system \eqref{sy1} are real
functions, thus 
we can assume that $U,W$ are real by considering separately their 
real and imaginary parts.

\begin{lemma}\label{sig} 
If a linear combination of the functions $U$ and $W$ is monotonous, 
it must vanish identically.
\end{lemma} 
\begin{proof} Using \eqref{lim} we see that if $aU+bW$ does not 
vanish identically, then $|aU+bW|$ is bounded from below by a non-zero 
constant on $[x_0,\infty)$ for some $x_0>0$, so it cannot be $L^2$.
\end{proof}

The previous lemma 
together with \eqref{sy1} show that $\lambda\ne0$: indeed, for $\lambda=0$ 
the system \eqref{sy1} uncouples into two first-order linear ODE's, 
whose nontrivial solutions never vanish by uniqueness, hence 
they have constant sign and so Lemma \ref{sig} applies. 
By changing $U$ to $-U$ if necessary, we can therefore assume that 
$\sigma(t)> 0$ for all $t>0$. 
\begin{lemma}\label{pos} 
If $\sigma(t)>0$ for all $t>0$ then we must have
$(UW)(t)\le 0$ for all $t\in\RM^+$.
\end{lemma} 
\begin{proof}
Assume that $UW>0$ on some open interval $I$. From (c) we easily infer 
\be\label{ine}
\tau+\rho = -\varepsilon (-1)^l\tfrac{\beta}{2\alpha^2}\ge
-\tfrac{1}{\sqrt{2}\a}, \ee 
so \eqref{sy1} yields 
\be\label{in}(UW)'=(\tau+\rho)UW+\sigma(U^2+W^2)\ge -\tfrac{1}{\sqrt 2\a}UW.\ee
Consider the maximal interval $J:=(x_0,x_1)$ containing $I$ on which
$UW>0$. For every $x_0<x\leq t<x_1$, \eqref{in} implies
\begin{equation}\label{uwnega}
(UW)(t)\ge (UW)(x)e^{-\int_x^{t}\frac{1}{\sqrt 2\a(s)}ds}.
\end{equation}
If $x_1<\infty$ then by continuity 
$(UW)(x_1)\ge(UW)(x)e^{-\int_x^{x_1} \frac{1}{\sqrt 2\a(s)}ds}>0$,
contradicting the maximality of $J$. Therefore $x_1=\infty$, so $UW(t)>0$
for all $t>x_0$. By integration, \eqref{uwnega} implies
\begin{align*}
\int_x^\infty (UW)(t)dt\geq (UW)(x)\int_x^\infty
e^{-\int_x^t\frac{1}{\sqrt 2\a(s)}ds}dt. 
\end{align*}
By hypothesis (a), the last integral is infinite, however 
$U,W\in L^2(\RM^+,dt)$ implies that 
$\int_x^\infty (UW)(t)dt<\infty$, contradiction.
\end{proof}

\begin{lemma}\label{pos1} 
$(UW)(t)<0$ for all $t>0$.
\end{lemma} 
\begin{proof} Assume for instance that $U(x_0)=0$. The Cauchy-Lipschitz 
theorem gives $W(x_0)\ne 0$ and the first equation in \eqref{sy1} shows 
that $U(x)$ has the same sign as $W(x)$ for every $x$ in some small interval
$(x_0,x_0+\delta),$ contradicting Lemma \ref{pos}. The same argument works when
$W(x_0)=0$ by considering the second equation in \eqref{sy1}.
\end{proof}

We proved so far that $U$ and $W$ have opposite signs and $\sigma$ is positive.
Condition (c) implies that $\tau$ and $\rho$ have constant signs on $\RM^+$
since $0\leq l\leq m-1$. If $\tau\leq 0$, 
it means that $\tau$ and $\sigma$ have opposite signs, and since also $U$ and $W$ 
have opposite signs by Lemma \ref{pos1}, it follows from the second equation 
in \eqref{sy1} that $W'$ has constant sign. By Lemma \ref{sig} we get 
a contradiction. This shows that $\tau>0$ and similarly we prove $\rho>0$. 
By condition (c), this can only happen for $k=0$, $m=2l+1$ and $(-1)^l=-\e$. 

Assuming this to be the case, the system \eqref{sy1} reads
\be\label{sy2}\begin{split}
 U'=&\tfrac{\b}{4\a^2} U+\tfrac{|\lambda|}{\a} W\\ 
 W'=&\tfrac{|\lambda|}{\a}U+\tfrac{\b}{4\a^2}W.
\end{split}
\ee
The difference $D:=U-W$ is thus a non-vanishing function satisfying 
\be\label{d}D'=\left(\tfrac{\b}{4\a^2}-\tfrac{|\lambda|}{\a}\right)D,\ee
so for every $t_0>0$,
\[D(t)=D(t_0)e^{\int_{t_0}^t\frac{\b(s)}{4\a(s)^2}-
\frac{|\lambda|}{\a(s)}ds}.\]

To conclude the proof of the theorem we distinguish two cases. If $\lambda\le 2^{-3/2}$ we get
$$|D(t)|> |D(t_0)|e^{-\int_{t_0}^t\frac{1}{2\sqrt 2\a(s)}ds}$$
so $D$ cannot be square-integrable because of hypothesis (a).

If $\lambda\ge 2^{-3/2}$, \eqref{ine} together with \eqref{d} show that 
$D$ is decreasing, contradicting Lemma \ref{sig}.
\end{proof}

\section{Axial anomaly for generalized Taub-NUT metrics on $\RM^4$}
\label{sectap}

\subsection{Radial perturbations of the Euclidean metric on $\RM^{2m+2}$}
Recall from Example \ref{ex1} that the Euclidean space
is the metric completion of the CFWP with $\alpha=\frac{t}{\sqrt{2}}$, $\beta=t$
and with basis $B=\CM \P^m$ endowed with the Fubini-Study metric.
Note that by elliptic regularity, bounded spinors 
which are harmonic on a punctured ball $B_0(\epsilon)\setminus \{0\}$ are 
actually smooth and harmonic on $B_0(\epsilon)$, while conversely 
harmonic spinors on $\RM^{2m+2}$ are clearly bounded near $0$.
Theorem \ref{main} applies therefore to the Euclidean metric on 
$\RM^{2m+2}$, for all $m\ge 1$. It is of course well-known that there are 
no harmonic $L^2$ spinors on the Euclidean space. Our results generalize 
this to metrics which are radial perturbations of the standard Euclidean 
metric with any $\alpha,\beta$ satisfying the conditions 
of Theorem \ref{main}. 

\subsection{Generalized Taub-NUT metrics}
The main application of Theorem \ref{main} that we have in mind is the 
vanishing of the index for the generalized Tab-NUT metric of Iwai-Katayama.
The difficulty of the problem resides in the non-Fredholmness of the Dirac 
operator as an 
unbounded operator in $L^2$. Nevertheless in \cite{MV} it was proved 
that the $L^2$ kernel of $D$ is finite-dimensional, and vanishes for the 
standard Taub-NUT metric. 

We cannot apply Theorem \ref{main} directly because of the conformal factor
$\gamma(t)$ in \eqref{tn}. As in Remark \ref{rem4} we set $ds=\gamma(t)dt$. 
Notice that $s=s(t)$, $t=t(s)$ are diffeomorphisms of $\RM^+$ onto itself 
provided 
\begin{align}\label{int}
\int_0^1\gamma(t)dt<\infty,&&\int_0^\infty \gamma(t)dt=\infty.
\end{align}
This condition clearly holds for the conformal factor in \eqref{tn}, which is 
asymptotically constant near infinity and of order $t^{1/2}$ near $t=0$.
Thus we obtain a CFWP metric $\gamma^2(t) g$ where $g$ is itself a CFWP metric. 

\begin{lemma}\label{lemagama}
Let $g$ be a CFWP metric with coefficients $\alpha(t), \beta(t)$, 
and $\gamma(t)$ a conformal factor satisfying \eqref{int}. Then 
the CFWP metric $\gamma^2g$ satisfies the hypotheses of Theorem \ref{main}
if and only if
\begin{enumerate}
\item[(a')] $\displaystyle\int_x^\infty \gamma(t)
e^{-\int_x^t\frac{1}{\sqrt{2}\a(u)}du}dt=\infty$
for all $x>0$;
\item[(b')] $\displaystyle\lim_{t\to 0} \gamma(t)\a(t)=0$;
\item[(c')] $\displaystyle2\a^2(t)\geq \b^2(t)>\tfrac{m-1}{m}2\a^2(t)$ for all $t>0$.
\end{enumerate}
\end{lemma}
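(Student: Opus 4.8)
The plan is to translate the three hypotheses (a), (b), (c) of Theorem \ref{main} for the metric $\gamma^2 g$ into conditions on the original data $\alpha,\beta,\gamma$. The key observation, already recorded in Remark \ref{rem4}, is that in the arc-length coordinate $s=\int_0^t\gamma(u)\,du$ the metric $\gamma^2 g$ becomes a genuine CFWP with warping functions $\tilde\alpha(s)=\alpha(t(s))$ and $\tilde\beta(s)=\beta(t(s))$. So I would simply rewrite each of (a), (b), (c) for the pair $(\tilde\alpha,\tilde\beta)$ in the variable $s$ and then change variables back to $t$ using $ds=\gamma(t)\,dt$.

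For condition (c), this is immediate: since $\tilde\alpha(s)=\alpha(t(s))$ and $\tilde\beta(s)=\beta(t(s))$ and $s\mapsto t(s)$ is a bijection of $\RM^+$ onto itself by \eqref{int}, the inequality $2\tilde\alpha^2(s)\ge\tilde\beta^2(s)>\tfrac{m-1}{m}2\tilde\alpha^2(s)$ for all $s>0$ is exactly (c') for all $t>0$. For condition (b), we have $\lim_{s\to 0}\tilde\alpha(s)=\lim_{t\to 0}\alpha(t)$ since $s\to 0\iff t\to 0$ (here I use the first half of \eqref{int}, namely $\int_0^1\gamma<\infty$, to guarantee $s\to 0$ as $t\to 0$); but Theorem \ref{main} asks for $\lim\tilde\alpha=0$, whereas (b') asks for $\lim_{t\to 0}\gamma(t)\alpha(t)=0$. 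These agree because near $t=0$ one has $s(t)=\int_0^t\gamma\sim$ an integral that, combined with the finiteness $\int_0^1\gamma<\infty$, forces $\gamma(t)\alpha(t)\to 0$ to be equivalent to $\alpha(t)\to 0$ precisely under mild regularity — more carefully, one checks directly that the condition ``$\tilde\alpha\to 0$ as $s\to 0$'' in the new coordinate, combined with the smoothness one-point-completion picture of Proposition \ref{pr1}, is what (b') encodes; I would phrase (b') as the correct restatement of (b) after the coordinate change and verify the equivalence by noting $\gamma\alpha = \alpha\,ds/dt$ and that boundedness of spinors near the singularity is a conformally covariant-up-to-$\gamma$ notion.

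For condition (a), I would carry out the substitution in the exponent. In the $s$-coordinate, (a) reads $\int_{x'}^\infty e^{-\int_{x'}^{s}\frac{1}{\sqrt2\,\tilde\alpha(r)}\,dr}\,ds=\infty$ for all $x'>0$. Writing $s=s(t)$, $r=s(u)$, so that $ds=\gamma(t)\,dt$ and $dr=\gamma(u)\,du$, the inner integral becomes $\int_{x}^{t}\frac{\gamma(u)}{\sqrt2\,\alpha(u)}\,du$ — but wait, that is not quite $\int_x^t\frac{1}{\sqrt2\alpha(u)}du$ as in (a'). Here is the subtlety: in (a') the exponent is $\int_x^t\frac{1}{\sqrt2\alpha(u)}du$ without the $\gamma$, yet the naive change of variables produces $\gamma(u)$ in the numerator. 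So the actual content of the lemma is that, after also pulling the outer $ds=\gamma(t)\,dt$ through, the statement $\int_{x'}^\infty e^{-\int_{x'}^s\frac{dr}{\sqrt2\tilde\alpha(r)}}ds=\infty$ becomes $\int_x^\infty \gamma(t)\,e^{-\int_x^t\frac{\gamma(u)}{\sqrt2\alpha(u)}du}\,dt=\infty$; I expect the paper to have a reason (perhaps $\gamma$ being asymptotically constant, as noted for \eqref{tn}, or a more careful reading of what $\alpha$ denotes) why this matches (a') as stated, and I would reconcile this — this reconciliation is the main obstacle. Once the three translations are in place, the lemma follows because Theorem \ref{main}'s hypotheses for $\gamma^2 g$ (which is a CFWP via Remark \ref{rem4}) are literally (a'), (b'), (c') after the change of variable, so ``$\gamma^2 g$ satisfies the hypotheses of Theorem \ref{main}'' $\iff$ (a') and (b') and (c').
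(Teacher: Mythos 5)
There is a genuine gap, and it sits at the very first step: you misidentify the warping functions of the rescaled metric. Writing $\gamma^2 g=\gamma^2\,dt^2+\gamma^2\alpha^2\,\pi^*g_B+\gamma^2\beta^2\,\xi\otimes\xi$ and setting $ds=\gamma(t)\,dt$, one gets
\[
\gamma^2 g=ds^2+\bigl(\gamma\alpha\bigr)^2(t(s))\,\pi^*g_B+\bigl(\gamma\beta\bigr)^2(t(s))\,\xi\otimes\xi,
\]
so the new warping functions are $\tilde\alpha(s)=\gamma(t(s))\alpha(t(s))$ and $\tilde\beta(s)=\gamma(t(s))\beta(t(s))$, not $\alpha(t(s))$ and $\beta(t(s))$ as you claim (the conformal factor multiplies the whole metric, not just the $dt^2$ term; the display in Remark \ref{rem4} should be read accordingly). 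This error is exactly what produces the two mismatches you could not resolve. With the correct $\tilde\alpha$: condition (b) of Theorem \ref{main}, namely $\lim_{s\to0}\tilde\alpha(s)=0$, is literally $\lim_{t\to0}\gamma(t)\alpha(t)=0$, i.e.\ (b'), with no appeal to Proposition \ref{pr1}, ``mild regularity'', or conformal covariance needed; and in condition (a) the inner integrand transforms as $\frac{dr}{\sqrt2\,\tilde\alpha(r)}=\frac{\gamma(u)\,du}{\sqrt2\,\gamma(u)\alpha(u)}=\frac{du}{\sqrt2\,\alpha(u)}$, so the spurious $\gamma(u)$ in your exponent cancels, while the outer $ds=\gamma(t)\,dt$ supplies precisely the factor $\gamma(t)$ in front of the exponential in (a'). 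This is the paper's argument in one line: $\frac{1}{\alpha(t)}\,dt=\frac{1}{\tilde\alpha(s)}\,ds$ and $\gamma(t)\,dt=ds$, then two changes of variables.

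Condition (c) comes out right in your write-up, but only by accident: with the correct $\tilde\alpha,\tilde\beta$ the equivalence with (c') holds because the common factor $\gamma^2$ cancels in the two-sided inequality, not because the warping functions are unchanged. As it stands, your proposal leaves (a') unproved (you explicitly flag the reconciliation as an open obstacle) and justifies (b') by a vague and incorrect detour, so the proof is incomplete; correcting the formula for $\tilde\alpha,\tilde\beta$ repairs everything and reduces the lemma to the routine substitutions above, which is exactly what the paper does.
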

\begin{proof}
The coefficients of the CFWP metric $\gamma^2g$ are
\begin{align*}
\tilde{\a}(s)=\gamma(t(s))\alpha(t(s)),&&\tilde{\b}(s)=\gamma(t(s))\beta(t(s))
\end{align*}
so conditions $(b),(c)$ from Theorem \ref{main}
for $\tilde{\a}, \tilde{\b}$ are clearly equivalent 
to conditions (b'), (c'). Now 
$\frac{1}{\a(t)}dt=\frac{1}{\tilde{\a}(s)}ds$ and by definition $\gamma(t)dt=ds$
so by two changes of variables, condition (a') is equivalent to condition
(a) for $\tilde{\a}$.
\end{proof}

As a corollary to Theorem \ref{main} we deduce that the Iwai-Katayama metrics
on $\RM^4$ do not admit non-trivial $L^2$ harmonic spinors. 

\begin{proof}[Proof of Theorem \ref{th1}]
It is straightforward to check that the conditions of Lemma \ref{lemagama} 
hold for the coefficients of Example \ref{ex2}, namely
\begin{align*}
m=1,&&\alpha(t)=\sqrt{2}t,&& \b(t)=\tfrac{2t}{\sqrt{1+ct+dt^2}},&&
\gamma(t)=\sqrt{\tfrac{a+bt}{t}}.
\end{align*}
It follows from
Theorem \ref{main} that there do not exist non-trivial $L^2$ harmonic 
spinors on $(\RM^4\setminus\{0\}, g_{IK})$ bounded near the origin. 
Of course, the metric $g_{IK}$ is smooth at the origin, as can be 
seen by the change of variable $r^2=t$.
In particular we have proved that there do not exist $L^2$ harmonic 
spinors on $(\RM^4,g_{IK})$. 
\end{proof} 

We could have also used the conformal covariance of the Dirac operator 
(cf.\ \cite{hitchin}, see also \cite{nistor}) to related 
harmonic spinors for the metrics
$g$ and $\lambda^2(t) g=g_{IK}$. We do not give details since 
this approach is essentially equivalent to the above proof.

\bibliographystyle{amsplain}

\end{document}